 \newtheorem{theorem}{Theorem}[section]
 \newtheorem{corollary}[theorem]{Corollary}
 \newtheorem{lemma}[theorem]{Lemma}
 \newtheorem{proposition}[theorem]{Proposition}
 \theoremstyle{definition}
 \newtheorem{definition}[theorem]{Definition}
 \theoremstyle{remark}
 \newtheorem{remark}[theorem]{Remark}
\numberwithin{equation}{section}
\begin{document}
\title[Pasting and Reversing over Vector Spaces]{Pasting and Reversing Operations over some Vector Spaces}
\author[P. Acosta-Hum\'anez]{Primitivo Acosta-Hum\'anez}

\address[P. Acosta-Hum\'anez]{Universidad del Norte, Barranquilla - Colombia}
\email{pacostahumanez@uninorte.edu.co}
\author[A. Chuquen]{Adriana Lorena Chuquen}
\address[A. Chuquen]{Universidad Sergio Arboleda, Bogot\'a - Colombia}
\author[A. Rodr\'\i guez]{\'Angela Mariette Rodr\'{\i}guez}
\address[A. Rodr\'\i guez]{Universidad Nacional de Colombia, Bogot\'a - Colombia}
\dedicatory{To Angie Marcela Acosta, with occasion of her 15th birthday}
\maketitle

\begin{abstract}
Pasting and Reversing operations have been used successfully over the set of integer numbers, simple permutations, rings and recently over a generalized vector product. In this paper, these operations are defined from a natural way to be applied over vector spaces. In particular we study Pasting and Reversing over vectors, matrices and we rewrite some properties for polynomials as vector space. Finally we study some properties of Reversing through linear transformations as for example an analysis of palindromic and antipalindromic vector subspaces.\\

\noindent\footnotesize{\textbf{Keywords and Phrases}. \textit{Antipalindromic vector, linear transformation, palindromic vector, Pasting, Reversing, vector space.}}\\

\noindent\footnotesize{\textbf{MSC 2010}. Primary 15A03; Secondary 05E99}
\end{abstract}

\section*{Introduction}
\noindent\emph{Pasting} and \emph{Reversing} are natural processes that the people do day after day, we \emph{paste} two objects when we put them together
as one object, and we \emph{reverse} one object when we reflect it over a symmetry
axis. We can apply these processes over words, thus, \emph{Pasting} of \emph{lumber} with \emph{jack} is \emph{lumberjack}, while \emph{Reversing} of \emph{lumber} is \emph{rebmul}. A celebrate phrase of Albert Einstein is \emph{I prefer pi}, in which \emph{Reversing} of this phrase is itself and for instance this is a \emph{palindromic} phrase, another palindromic phrases can be found in  \texttt{http://www.palindromelist.net}.\medskip

Similarly to palindromic phrases, we can think in palindromic poetries, where each line can be palindromic or the hole poetry is palindromic.  The following poems can be found at \texttt{http://www.trauerfreuart.de/palindrome-poems.htm}\medskip
\begin{small}

\begin{center}
\textbf{Deific? A poem}\\
\emph{
Same deficit sale: doom mood. Elastic if edemas?\\
Loops secreting in a doom mood. An igniter: cesspool.\\
Set agony care till in a doom mood. An illiteracy: no gates.\\
Senile fileting: I am, God, doom mood. Dogma: ignite lifelines.\\
Straws? Send a snowfield in a doom mood. An idle, if won sadness warts.\\
Me, opacified.}
\end{center}

\begin{multicols}{2}
\begin{center}
{\textbf{Put in us - sun it up}}

\emph{Put in rubies, I won't be demandable.}

\emph{Balderdash: sure fire bottle fill-in.}

\emph{Raw, put in urn action, I'm odd.}

\emph{Local law: put in ruts. Awareness}

\emph{elates pure gnawed limekiln. Us:}

\emph{sunlike, mildew, anger, upset.}

\emph{A lessen era was: turn it up! Wall,}

\emph{a cold domino: it can run it up.}

\emph{Warn: ill I felt to be rife. Rush! Sad.}

\emph{Red label, bad name, debt nowise.}

\emph{I burn it up}
\end{center}
\columnbreak
\begin{center}
{\textbf{Space caps}}

\emph{Seed net: tabard. No citadel like sun is}

\emph{but spirit. Sense can embargo to get on.}

\emph{Still amiss: a pyro-memoir, an ecstasy.}

\emph{A detail, if fades, paler, tall, affined dusk.}

\emph{Row no risks, asks ironwork, sudden -}

\emph{if fall at relapsed affiliate - days at scenario:}

\emph{memory, pass! I'm all. It's not ego to grab}

\emph{menaces. Nest. I rip stubs in use, killed}

\emph{at icon. Drab attendees.}
\end{center}
\end{multicols}

\end{small}

One mathematical theory to express these processes as operations was developed by the first author in \cite{Ac1,Ac2}, followed recently by \cite{acarnu,acchro1, AM}.\\

\noindent In \cite{Ac2} is introduced the concept \emph{Pasting} of positive integers to obtain their squares as well their squares roots. Five years later, in \cite{Ac1}, are defined in a general way the concepts \emph{Pasting} and \emph{Reversing} to obtain genealogies of \emph{simple permutations} in the right block of \emph{Sarkovskii order} which contains the powers of two. Two years later, in \cite{acchro1}, were applied \emph{Pasting} and \emph{Reversing}, as well \emph{palindromicity} and \emph{antipalindromicity}, over the ring of polynomials, differential rings and mathematical games incoming from M. Tahan's book \emph{The man who counted}. Another approaches for \emph{reversed} polynomials, \emph{palindromic} polynomials and \emph{antipalindromic} polynomials can be found in \cite{Br, MaRa}. One year later, in \cite{acarnu}, is applied \emph{Reversing} over matrices to study a generalized vector product, in particular were studied relationships between \emph{palindromicity} and \emph{antipalindromicity} with such generalized vector product. Finally, in the preprint \cite{AM} were applied \emph{Pasting} and \emph{Reversing}
over \emph{simple permutations} with mixed order $4n+2$, following \cite{Ac1}.\\

\noindent The aim of this paper is to study \emph{Pasting} and \emph{Reversing}, as well \emph{palindromicity} and \emph{antipalindromicity}, over vector spaces (vectors, matrices, polynomials, etc.). Some properties are analyzed for vectors, matrices and polynomials as vector spaces, in particular we prove that $W_a\subset V$ (set of antipalindromic vectors of $V$) and $W_p\subset  V$ (set of palindromic
vectors of $V$) are vector subspaces of $V$ ($V$ is a vector space). Moreover, $V = W_a \oplus W_p$ and in
consequence $\dim(V) = \dim(W_a) + \dim(W_p)$.\\
 
 The reader does not need a high mathematical level to understand this paper, is enough with a basic knowledge of linear algebra and matrix theory, see for example the books given in references \cite{Ev,Lang}. Finally, as \emph{butterfly effect}, we hope that the results and approaches presented here can be used and implemented in the teaching of basic linear algebra for undergraduate level.

\section{Pasting and Reversing over Vectors}\label{sectionv}
\noindent In this section we study Pasting and Reversing over vectors in two different approaches, the first one corresponds to an analysis without linear transfomatios, using basic definitions and properties of vectors. We consider the field $K$ and the vector space $V=K^n$.
\subsection{A first study without linear transformations} Here we study Pasting and Reversing over vectors using the basic definitions and properties of vectors. In this way, any student beginner of linear algebra can understand the results presented. We start giving the definition of Reversing.
\begin{definition} \label{dd1}
 Let be $v=(v_{1},v_{2},\ldots,v_{n}) \in K^n$. Reversing of $v$, denoted by $\widetilde{v}$, is given by $\widetilde{v}=(v_{n},v_{n-1},\ldots,v_{1})$.
\end{definition}

Definition \ref{dd1} lead us to the following proposition.

\begin{proposition}\label{ppro1} Consider $v$ and $\widetilde{v}$ as in Definition \ref{dd1}. The following statements hold:
 \begin{enumerate}
\item $\tilde{\tilde{v}}=v$
\item $\widetilde{av+bw}=a\widetilde{v}+b\widetilde{w}$, being $a,b\in
K$ and $v,w\in V$
\item $v\cdot w=\widetilde{v}\cdot\widetilde{w}$
\item $\widetilde{(v\times w)}=\widetilde{w}\times\widetilde{v}$ for all $v,w\in K^{3}$
\end{enumerate}
\end{proposition}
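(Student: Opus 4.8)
The plan is to carry out all four parts from the single coordinatewise description of Reversing. Writing $\widetilde{v}=(u_1,\dots,u_n)$ for $v=(v_1,\dots,v_n)$, one has $u_i=v_{n+1-i}$ for every $i\in\{1,\dots,n\}$, and the key elementary fact is that the index map $\sigma\colon i\mapsto n+1-i$ is a bijection of $\{1,\dots,n\}$ onto itself with $\sigma\circ\sigma=\mathrm{id}$. Part (1) then follows at once: applying Reversing twice moves the $i$-th coordinate to position $\sigma(\sigma(i))=i$, so $\tilde{\tilde v}=v$.

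For part (2) I would compare the two sides coordinate by coordinate. The $i$-th coordinate of $\widetilde{av+bw}$ is $(av+bw)_{n+1-i}=av_{n+1-i}+bw_{n+1-i}$, which is precisely the $i$-th coordinate of $a\widetilde v+b\widetilde w$; hence the two vectors are equal. Part (3) combines the same idea with a change of summation index: $\widetilde v\cdot\widetilde w=\sum_{i=1}^{n}v_{n+1-i}\,w_{n+1-i}$, and substituting $k=\sigma(i)$ (which merely permutes the range of summation) rewrites this as $\sum_{k=1}^{n}v_k w_k=v\cdot w$.

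Part (4) is the only step that needs a genuine, if short, computation, and it is where I expect the bookkeeping of signs to demand the most care, since the cross product is anticommutative and Reversing flips the order of the two factors on the right-hand side. The direct route is to expand $v\times w=(v_2w_3-v_3w_2,\; v_3w_1-v_1w_3,\; v_1w_2-v_2w_1)$, reverse its three entries, then separately expand $\widetilde w\times\widetilde v$ using $\widetilde v=(v_3,v_2,v_1)$ and $\widetilde w=(w_3,w_2,w_1)$, and check that the three resulting coordinates agree. A slicker alternative, suitable as a remark, observes that on $K^{3}$ Reversing is the linear map given by the permutation matrix $P$ interchanging the first and third coordinates, with $\det P=-1$ and $(P^{-1})^{T}=P$; the standard identity $(Pv)\times(Pw)=\det(P)\,(P^{-1})^{T}(v\times w)$ then gives $\widetilde v\times\widetilde w=-\widetilde{v\times w}$, whence $\widetilde{v\times w}=-\widetilde v\times\widetilde w=\widetilde w\times\widetilde v$ by anticommutativity. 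Because the paper is addressed to beginners in linear algebra, I would present the explicit coordinate check as the primary argument and keep the matrix identity as an aside.
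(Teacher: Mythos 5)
Your proposal is correct and follows essentially the same route as the paper: parts (1)--(3) by the coordinatewise description of Reversing (the paper writes out the reversed tuples explicitly where you phrase it via the involution $i\mapsto n+1-i$), and part (4) by expanding the cross product and checking the three reversed coordinates, which is what the paper does through its determinant mnemonic. Your aside deriving (4) from the identity $(Pv)\times(Pw)=\det(P)\,(P^{-1})^{T}(v\times w)$ is a nice extra, but the primary argument matches the paper's.
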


\begin{proof} The proof is done according to each item:
\begin{enumerate}
\item Due to
$v\in K^n$, then by Definition \ref{dd1} we have that $\tilde{v}=(v_{n},v_{n-1},\ldots,v_{1})$ and for instance $\tilde{\tilde{v}}=(v_{1},v_{2},\ldots,v_{n})=v$.

\item Let $z=av+bw$, where $a,b\in K$ and $v,w\in V$. Therefore $$z=(av_1+bw_1,\ldots,av_n+bw_n)$$ and in consequence
$$\widetilde{av+bw}=\widetilde{z}=(av_n+bw_n,\ldots,av_1+bw_1).$$ By basic theory of linear algebra, particularly by properties of vectors, we have that $$\widetilde{z}=(av_{m},av_{m-1},\ldots,av_{1})+(bw_{m},bw_{m-1},\ldots,bw_{1}),$$ which implies that $$\widetilde{z}=a{(v_{n},v_{n-1},\ldots,v_{1})}+b(w_{n},w_{n-1},\ldots,w_{1})$$ and for instance $\widetilde{av+bw}=a\tilde{v}+b\tilde{w}$.
\item Let assume $v=(v_1,v_2,\ldots,v_n)$ and $w=(w_1,w_2,\ldots, w_n)$, thus $$v\cdot w=v_1w_1+\ldots+v_nw_n.$$ Now, $\widetilde{v}=(v_n,v_{n-1},\ldots, v_1)$ and $\widetilde{w}=(w_n,w_{n-1},\ldots, w_1)$, so we obtain $$\widetilde{v}\cdot \widetilde{w}=v_nw_n+v_{n-1}w_{n-1}+\ldots+ v_1w_1=v\cdot w.$$
\item Consider $v,w\in K^{3}$, where $v=(v_{1},v_{2},v_{3}),\,w=(w_{1},w_{2},w_{3})$. The vector product between $v$ and $w$ is given by $$v\times w =\left|\begin{matrix} e_1 & e_2 & e_3\\ v_1 & v_2 & v_3\\ w_1 & w_2 & w_3\end{matrix} \right|=\left(\left|\begin{matrix}v_2 & v_3\\ w_2 & w_3\end{matrix}\right|,-\left|\begin{matrix}v_1 & v_3\\ w_1 & w_3\end{matrix}\right|,\left|\begin{matrix}v_1 & v_2\\ w_1 & w_2\end{matrix}\right| \right),$$ by Definition \ref{dd1} we have that $$\widetilde{v\times w}=\left(\left|\begin{matrix}v_1 & v_2\\ w_1 & w_2\end{matrix}\right|,-\left|\begin{matrix}v_1 & v_3\\ w_1 & w_3\end{matrix}\right|,\left|\begin{matrix}v_2 & v_3\\ w_2 & w_3\end{matrix}\right| \right).$$ Now, by properties of determinants (interchanging rows and columns) we obtain
    $$\left(\left|\begin{matrix}w_2 & w_1\\ v_2 & v_1\end{matrix}\right|,-\left|\begin{matrix}w_3 & w_1\\ v_3 & v_1\end{matrix}\right|,\left|\begin{matrix}w_3 & w_2\\ v_3 & v_2\end{matrix}\right| \right)=\left|\begin{matrix} e_1 & e_2 & e_3\\ w_3 & w_2 & w_1\\ v_3 & v_2 & v_3\end{matrix} \right|,$$
    for instance $\widetilde{v\times w}=\widetilde{w}\times\widetilde{v}.$
\end{enumerate}
\end{proof}

Definition \ref{dd1} lead us to the following definition.

\begin{definition} \label{dd2} The vectors $v$ and $w$ are called palindromic vector
and antipalindromic vector respectively whether they satisfy $\widetilde{v}=v$ and $\widetilde{w}=-w$.
\end{definition}

Definition \ref{dd2} lead us to the following result.

\begin{proposition}\label{ppro2}
The following statements hold.
\begin{enumerate}
\item The sum of two palindromic vectors belonging to $K^n$ is a palindromic vector belonging to $K^n$.
\item The sum of two antipalindromic vectors belonging to $K^n$ is an antipalindromic vector belonging to $K^n$.
\item The vector product of two palindromic vectors belonging to $K^3$ is an antipalindromic vector belonging to $K^3$.
\item The vector product of two antipalindromic vectors belonging to $K^3$ is the vector $(0,0,0)$.
\item The vector product of one palindromic vector belonging to $K^3$ with one antipalindromic vector belonging to $K^3$ is a palindromic vector belonging to $K^3$.
\end{enumerate}
\end{proposition}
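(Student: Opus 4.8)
The plan is to reduce items (1), (2), (3) and (5) to the algebraic identities already recorded in Proposition \ref{ppro1}, and to handle item (4) separately by first determining exactly which vectors of $K^3$ are antipalindromic.

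For (1) and (2), suppose $v,w$ are both palindromic (resp. both antipalindromic), i.e.\ $\widetilde v=v$ and $\widetilde w=w$ (resp.\ $\widetilde v=-v$ and $\widetilde w=-w$). Applying part (2) of Proposition \ref{ppro1} with $a=b=1$ gives $\widetilde{v+w}=\widetilde v+\widetilde w$, which equals $v+w$ in the palindromic case and $-(v+w)$ in the antipalindromic case; by Definition \ref{dd2} this is exactly the required conclusion, and closure of $K^n$ under addition is automatic.

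For (3) and (5) I would combine part (4) of Proposition \ref{ppro1}, namely $\widetilde{v\times w}=\widetilde w\times\widetilde v$, with the anticommutativity of the cross product, $w\times v=-(v\times w)$. If $v,w$ are both palindromic then $\widetilde{v\times w}=\widetilde w\times\widetilde v=w\times v=-(v\times w)$, so $v\times w$ is antipalindromic; if $v$ is palindromic and $w$ is antipalindromic then $\widetilde{v\times w}=\widetilde w\times\widetilde v=(-w)\times v=-(w\times v)=v\times w$, so $v\times w$ is palindromic.

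The one point that is not a pure formality is item (4): the same computation only shows that $v\times w$ is antipalindromic, which is consistent with (3) and (5) but weaker than what is claimed. To close the gap I would unwind Definition \ref{dd2} in coordinates: if $v=(v_1,v_2,v_3)$ satisfies $\widetilde v=-v$, then $v_3=-v_1$ and $v_2=-v_2$, so (away from characteristic $2$) $v_2=0$ and $v=v_1(1,0,-1)$. Hence every antipalindromic vector of $K^3$ lies on the line spanned by $(1,0,-1)$, so any two of them are linearly dependent and their cross product vanishes; equivalently, one may substitute $v=(v_1,0,-v_1)$ and $w=(w_1,0,-w_1)$ directly into the determinant formula for $v\times w$ from Proposition \ref{ppro1} and read off $(0,0,0)$. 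The only subtlety to watch is the characteristic-$2$ case, where ``palindromic'' and ``antipalindromic'' coincide and the statement must be interpreted accordingly.
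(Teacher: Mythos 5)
Your proof is correct, and for items (3)--(5) it takes a genuinely different route from the paper. The paper proves (1) and (2) exactly as you do, via linearity of Reversing, but it handles (3), (4) and (5) by writing a palindromic vector of $K^3$ explicitly as $(v_1,v_2,v_1)$ and an antipalindromic one as $(-v_1,0,v_1)$, then computing the cross product in coordinates and inspecting the result. You instead derive (3) and (5) coordinate-free from the identity $\widetilde{v\times w}=\widetilde{w}\times\widetilde{v}$ of Proposition \ref{ppro1} together with anticommutativity of the cross product; this is cleaner, makes the sign bookkeeping transparent, and explains structurally why the types combine as they do (it is essentially the $n=3$ case of the generalized vector product results the paper proves later in Section \ref{sectionm}). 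You are also right that this identity alone only yields antipalindromicity in case (4), and your fix --- observing that the antipalindromic vectors of $K^3$ form the one-dimensional subspace spanned by $(1,0,-1)$, so any two are parallel --- is a slightly more conceptual version of the paper's direct substitution. Finally, your caveat about characteristic $2$ is a genuine point the paper passes over silently: the normal form $(-v_1,0,v_1)$ used in both your argument and the paper's rests on $2v_2=0\Rightarrow v_2=0$, and item (4) can fail over $\mathbb{F}_2$.
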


\begin{proof} We prove each statement according to its item.
\begin{enumerate}
\item Let $v$ and $w$ be palindromic vectors. We have that
$\widetilde{v+w}=\widetilde{v}+\widetilde{w}=v+w$. In consequence,
$v+w$ is a palindromic vector.
\item Let $v$ and $w$ be antipalindromic vectors. We have that
$$\widetilde{v+w}=\widetilde{v}+\widetilde{w}=-v-w=-(v+w).$$ In consequence, $v+w$ is an antipalindromic vector.
\item Let assume $v=(v_1,v_2,v_1)$ and  $w=(w_1,w_2,w_1)$. The vector product between $v$ and $w$ is
$$v\times w=(v_{2}w_{1}-w_{2}v_{1},0,v_{1}w_{2}-v_{2}w_{1})=z.$$ Now, by Definition \ref{dd1} we obtain $$\widetilde{z}=(v_{1}w_{2}-v_{2}w-{1},0,v_{2}w_{1}-v_{1}w_{2})$$ and $$-\widetilde{z}=(v_{2}w_{1}-v_{1}w_{2},0,v_{1}w_{2}-v_{2}w_{1}).$$ For instance $z=-\tilde{z}$ and owing to Definition \ref{dd2}, $v\times w$ is an antipalindromic vector.
\item Let assume $v=(-v_{1},0,v_{1})$ and $w=(-w_{1},0,w_{1})$, then the vector product between $v$ and $w$ is given by $v\times w=(0,0,0)$.
\item Consider $v=(v_1 ,v_2, v_1)$ and $w=(-w_1 ,0,w_1)$, the vector product between $v$ and $w$ is given by $v\times w=(v_2 w_1,-2v_1 w_1,v_2 w_1)$. Now, denoting $z:=v\times w$, we have by Definition \ref{dd1} that $\widetilde{z}=(v_2 w_1,-2v_1 w_1,v_2 w_1)$. Thus, by Definition \ref{dd2} we obtain that $v\times w$ is a palindromic vector.
\end{enumerate}
\end{proof}
\begin{remark}\label{remacarnu}
In \cite{acarnu} were studied, in a more general way, the vector product for palindromic and antipalindromic vectors. There was used a generalized vector product and were obtained some results involving the palindromicity and antipalindromicity of vectors. For completeness we present in section \ref{sectionm} such results with proofs in detail.
\end{remark}

Now we proceed to introduce the concept of Pasting over vectors.

\begin{definition} \label{dd3}
Consider $v\in K^{n}$ and $w\in K^m$, then
$v\diamond w$ is given by $$(v_{1},v_{2},\ldots,v_{n})\diamond(w_{1},w_{2},\ldots,w_{m})=(v_{1},v_{2},\ldots,v_{n},w_{1},w_{2},\ldots,w_{m})$$
\end{definition}

The following properties are consequences of Definition \ref{dd3}.

\begin{proposition}\label{ppro3}
If $V=K^n$ and $W=K^m $, then $V \diamond W\cong K^{n+m}$
\end{proposition}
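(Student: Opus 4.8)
The plan is to exhibit an explicit isomorphism of vector spaces between $V\diamond W$ and $K^{n+m}$. First I would clarify what the object $V\diamond W$ is: by Definition \ref{dd3}, the operation $\diamond$ takes a vector in $K^n$ and a vector in $K^m$ and concatenates them, so $V\diamond W=\{v\diamond w : v\in V,\ w\in W\}$ sits naturally inside $K^{n+m}$. I would first observe that in fact every element of $K^{n+m}$ arises this way: given $(x_1,\ldots,x_{n+m})\in K^{n+m}$, set $v=(x_1,\ldots,x_n)\in V$ and $w=(x_{n+1},\ldots,x_{n+m})\in W$, so that $v\diamond w=(x_1,\ldots,x_{n+m})$. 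Hence as sets $V\diamond W=K^{n+m}$, and the content of the proposition is really that the natural identification respects the linear structure.

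Next I would make the vector-space structure explicit. The claim implicitly uses the componentwise operations on $V\diamond W$ inherited from $K^{n+m}$, together with the "obvious" compatibility $(v_1\diamond w_1)+(v_2\diamond w_2)=(v_1+v_2)\diamond(w_1+w_2)$ and $\lambda(v\diamond w)=(\lambda v)\diamond(\lambda w)$, both of which follow immediately from Definition \ref{dd3} by writing out coordinates. With these in hand, I would define the map
\[
\varphi:V\times W\to K^{n+m},\qquad \varphi(v,w)=v\diamond w,
\]
and note that the identities just mentioned say precisely that $\varphi$ is $K$-linear when $V\times W$ is given the product vector-space structure (which is isomorphic to $V\oplus W\cong K^n\oplus K^m$). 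I would then check injectivity — if $v\diamond w=0$ then reading off the first $n$ and last $m$ coordinates forces $v=0$ and $w=0$ — and surjectivity, already established above by the splitting of an arbitrary vector in $K^{n+m}$.

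Finally I would conclude: $\varphi$ is a linear bijection, hence an isomorphism, so $V\diamond W\cong V\oplus W\cong K^{n+m}$; a dimension count ($\dim V+\dim W=n+m$) gives an alternative one-line justification once linearity and injectivity (or surjectivity) are known. The only genuinely delicate point is a matter of exposition rather than mathematics: one must be careful about what "$V\diamond W$" denotes as a vector space, since $\diamond$ is a priori just a binary operation on elements and not obviously closed under addition unless one either (i) identifies $V\diamond W$ with its image $K^{n+m}$ from the outset, or (ii) transports the structure of $V\oplus W$ across $\varphi$. I expect this identification step to be the main thing to pin down; after that the verifications are routine coordinate computations that I would not spell out in full.
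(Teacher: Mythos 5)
Your proof is correct and rests on the same natural identification the paper uses: concatenation sends a pair $(v,w)$ to an element of $K^{n+m}$, and every element of $K^{n+m}$ arises this way. You are considerably more explicit than the paper, whose own proof merely notes that $v\diamond w\in K^{n+m}$ and appeals to the existence of a basis of $K^{n+m}$; your verification that $\varphi(v,w)=v\diamond w$ is a linear bijection out of $V\oplus W$, together with your remark about which vector-space structure $V\diamond W$ carries, supplies exactly the details the paper leaves implicit.
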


\begin{proof} Let $B_{n}=\{b_{1},b_{2},\ldots,b_{n}\}$ and $B_{m}=\{c_{1},c_{2},\ldots,c_{m}\}$ basis of $K^n$ and $K^m$ respectively. Due to
$v\in K^n$ and $w\in K^n$, we have by Definition \ref{dd3} that
$v\diamond w \in K^{n+m}$, then there exists a basis $B_{n+m}=\{d_{1},d_{2},\ldots,d_{n+m}\}$ belonging to $K^{n+m}$, for instance $K^n\diamond K^m  \cong K^{n+m}$.
\end{proof}

As immediate consequence of Proposition \ref{ppro3} we obtain the following result.

\begin{corollary}\label{ccol1}
$dim(V\diamond W)$ $=$ $dimV+dimW$
\end{corollary}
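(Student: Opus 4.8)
The plan is to invoke Proposition \ref{ppro3} directly: since $V\diamond W\cong K^{n+m}$ as vector spaces, and since dimension is an isomorphism invariant, I would conclude $\dim(V\diamond W)=\dim K^{n+m}=n+m$. To finish, I would observe that $n=\dim V$ and $m=\dim W$ by hypothesis ($V=K^n$, $W=K^m$), so that $\dim(V\diamond W)=\dim V+\dim W$, which is exactly the claim.

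In more detail, the key steps in order are: first, cite Proposition \ref{ppro3} to fix the isomorphism $\varphi\colon V\diamond W\to K^{n+m}$; second, recall the standard linear-algebra fact that if two finite-dimensional $K$-vector spaces are isomorphic then they have the same dimension (a linear isomorphism sends a basis to a basis, so it preserves basis cardinality); third, compute $\dim K^{n+m}=n+m$ using the canonical basis $\{e_1,\ldots,e_{n+m}\}$; fourth, substitute $n=\dim V$ and $m=\dim W$. No case analysis is needed, and the field $K$ plays no role beyond being a field.

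The only point requiring any care is conceptual rather than computational: the corollary concerns $\dim(V\diamond W)$, and one should be clear that $V\diamond W$ here denotes the vector space obtained by pasting (whose elements are the concatenations $v\diamond w$ as in Definition \ref{dd3}), so that Proposition \ref{ppro3} applies verbatim. Once that is granted, there is no obstacle at all — this is an immediate consequence and the proof is essentially one line.

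\begin{proof}
By Proposition \ref{ppro3} we have $V\diamond W\cong K^{n+m}$. A linear isomorphism carries a basis to a basis, hence isomorphic finite-dimensional vector spaces have equal dimension; therefore $\dim(V\diamond W)=\dim K^{n+m}=n+m$. Since $V=K^n$ and $W=K^m$, we have $n=\dim V$ and $m=\dim W$, so $\dim(V\diamond W)=\dim V+\dim W$.
\end{proof}
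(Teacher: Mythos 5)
Your proof is correct and follows exactly the route the paper intends: the paper gives no separate argument for this corollary, presenting it as an immediate consequence of Proposition \ref{ppro3}, which is precisely what you spell out. Nothing further is needed.
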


\begin{proposition}\label{pprop4} The following statements holds.
\begin{enumerate}
\item$\widetilde{v\diamond w}=\tilde{w}\diamond \tilde{v}$
\item$(v\diamond w)\diamond z=v\diamond (w\diamond z)$
\end{enumerate}
\end{proposition}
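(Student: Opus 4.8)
The plan is to prove both identities by direct computation from Definitions \ref{dd1} and \ref{dd3}, since the claims are purely combinatorial statements about the order in which coordinates are listed. Let $v=(v_{1},\ldots,v_{n})\in K^{n}$ and $w=(w_{1},\ldots,w_{m})\in K^{m}$, and when needed for item (2) also $z=(z_{1},\ldots,z_{p})\in K^{p}$.

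For item (1), I would first apply Definition \ref{dd3} to write $v\diamond w=(v_{1},\ldots,v_{n},w_{1},\ldots,w_{m})\in K^{n+m}$, and then apply Definition \ref{dd1} to this vector of length $n+m$, obtaining $\widetilde{v\diamond w}=(w_{m},w_{m-1},\ldots,w_{1},v_{n},v_{n-1},\ldots,v_{1})$. On the other side, $\widetilde{w}=(w_{m},\ldots,w_{1})$ and $\widetilde{v}=(v_{n},\ldots,v_{1})$, so pasting them via Definition \ref{dd3} gives $\widetilde{w}\diamond\widetilde{v}=(w_{m},\ldots,w_{1},v_{n},\ldots,v_{1})$, which is exactly the same tuple. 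Hence $\widetilde{v\diamond w}=\widetilde{w}\diamond\widetilde{v}$.

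For item (2), both $(v\diamond w)\diamond z$ and $v\diamond(w\diamond z)$ unfold, by iterated use of Definition \ref{dd3}, to the single tuple $(v_{1},\ldots,v_{n},w_{1},\ldots,w_{m},z_{1},\ldots,z_{p})\in K^{n+m+p}$; the point is simply that concatenation of finite lists is associative, so the two bracketings produce the same sequence of coordinates in the same order. This establishes the associativity of $\diamond$.

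I do not expect any genuine obstacle here: both parts reduce to observing that two explicitly written tuples coincide entry by entry. The only mild care needed is bookkeeping with the indices and lengths — keeping track that the blocks have sizes $n$, $m$ (and $p$), and that Reversing of a concatenated vector reverses the whole list, which simultaneously reverses each block and swaps their order. A clean way to present this, if desired, is to note that $\widetilde{(\cdot)}$ is the action of the longest element of the symmetric group on coordinates, but the elementary tuple-chasing argument above is entirely sufficient and matches the level of the rest of the section.
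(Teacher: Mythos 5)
Your proposal is correct and follows essentially the same route as the paper's own proof: for item (1) you unfold $v\diamond w$ via Definition \ref{dd3}, apply Definition \ref{dd1} to the concatenated $(n+m)$-tuple, and match the result entry by entry with $\widetilde{w}\diamond\widetilde{v}$; for item (2) you observe that both bracketings yield the single tuple $(v_{1},\ldots,v_{n},w_{1},\ldots,w_{m},z_{1},\ldots,z_{p})$. No gaps; the index bookkeeping you describe is exactly what the paper does.
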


\begin{proof}
We consider each item separately.
\begin{enumerate}
\item Consider $V=K^n$ and $W=K^m$. Suppose that $v=(v_{1},v_{2},\ldots,v_{n})\in V$ and  $w=(w_{1},w_{2},\ldots,w_{m})\in W$. Owing to Definition \ref{dd3} we have that $$v \diamond w=(v_{1},v_{2},\ldots,v_{n},w_{1},w_{2},\ldots,w_{m})=z.$$ Now, by Definition \ref{dd1} we obtain $$\widetilde{z}=(w_{m},w_{m-1},\ldots,w_{1},v_{n},v_{n-1},\ldots,v_{1}),$$ which by Definition \ref{dd3} lead us to
$$\widetilde{z}=(w_{m},w_{m-1},\ldots,w_{1})\diamond(v_{n},v_{n-1},\ldots,v_{1})$$ and therefore
$$\widetilde{v\diamond w}=\tilde{w}\diamond \tilde{v}.$$
\item Let assume
$V=K^n$, $W=K^m$, $Z=K^{\ell}$,$v\in V$, $w\in W$ and $z\in Z$. By Definition \ref{dd3} we have that $$(v\diamond w)\diamond
z=(v_{1},v_{2},\ldots,v_{n},w_{1},w_{2},\ldots,w_{m})\diamond(z_{1},z_{2},\ldots,z_{\ell}),$$ which implies that
$$(v\diamond w)\diamond
z=(v_{1},v_{2},\ldots,v_{n},w_{1},w_{2},\ldots,w_{m},z_{1},z_{2},\ldots,z_{\ell}).$$ Again, in virtue of Definition \ref{dd3}, we have that $$(v\diamond w)\diamond
z=(v_{1},v_{2},\ldots,v_{n})\diamond((w_{1},w_{2},\ldots,w_{m})\diamond(z_{1},z_{2},\ldots,z_{\ell})),$$ thus we can conclude that $$(v\diamond w)\diamond z=v\diamond(w\diamond z).$$
\end{enumerate}
\end{proof}

\begin{proposition}\label{tt1} Let $V=K^n$ be a vector space. Consider $W_{p}$ and $W_{a}$ as the sets of palindromic and antipalindromic vectors of $V$ respectively. The following statements hold.
\begin{enumerate}
\item $W_{p}$ is a vector subspace of $V$,
\item $\dim W_{p}=\lceil\frac{n}{2}\rceil$,
\item $W_{a}$ is a vector subspace of $V$,
\item $\dim W_{a}=\lfloor\frac{n}{2}\rfloor$,
\item $V=W_p\oplus W_a$,
\item $\forall v\in V$, $\exists (w_p,w_a)\in W_p\times W_a$ such that $v=w_p+w_a$.
\end{enumerate}
\end{proposition}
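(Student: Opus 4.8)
The plan is to handle the six items in the order (1), (3), (2), (4), (6), (5), since the subspace property and the dimension formulas both feed into the direct-sum statement. For (1) and (3) I would first observe that the zero vector is trivially both palindromic and antipalindromic, so both sets are nonempty; closure under addition is exactly Proposition \ref{ppro2}(1) and (2); and closure under scalar multiplication follows from Proposition \ref{ppro1}(2) taken with $b=0$, since $\widetilde v=v$ gives $\widetilde{av}=a\widetilde v=av$ and $\widetilde w=-w$ gives $\widetilde{aw}=a\widetilde w=-aw$. Hence $W_p$ and $W_a$ are subspaces of $V$.

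For the dimensions I would translate the defining relations into coordinates. By Definition \ref{dd1}, for $v=(v_1,\dots,v_n)$ the condition $\widetilde v=v$ means $v_i=v_{n+1-i}$ for all $i$, while $\widetilde v=-v$ means $v_i=-v_{n+1-i}$ for all $i$. In the palindromic case the coordinates $v_1,\dots,v_{\lceil n/2\rceil}$ can be chosen freely and determine the rest, so a basis of $W_p$ is given by $f_i:=e_i+e_{n+1-i}$ for $1\le i\le\lfloor n/2\rfloor$ together with the central vector $e_{(n+1)/2}$ when $n$ is odd; linear independence is immediate because distinct $f_i$ (and the central vector) are supported on disjoint sets of coordinates. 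This proves (2). In the antipalindromic case, when $n$ is odd the middle relation forces $v_{(n+1)/2}=0$, and $v_1,\dots,v_{\lfloor n/2\rfloor}$ are free, so $g_i:=e_i-e_{n+1-i}$ for $1\le i\le\lfloor n/2\rfloor$ is a basis of $W_a$, giving (4).

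For (6) I would use the averaging decomposition: given $v\in V$, set $w_p:=\frac12(v+\widetilde v)$ and $w_a:=\frac12(v-\widetilde v)$. Then $v=w_p+w_a$, and by Proposition \ref{ppro1}(1)--(2) we get $\widetilde{w_p}=\frac12(\widetilde v+v)=w_p$ and $\widetilde{w_a}=\frac12(\widetilde v-v)=-w_a$, so $(w_p,w_a)\in W_p\times W_a$, which is (6); in particular $V=W_p+W_a$. For (5) it then suffices to check that the sum is direct: if $v\in W_p\cap W_a$ then $v=\widetilde v=-v$, hence $2v=0$ and $v=0$. (Equivalently, one may invoke $\dim W_p+\dim W_a=\lceil n/2\rceil+\lfloor n/2\rfloor=n=\dim V$ together with $V=W_p+W_a$.) Therefore $V=W_p\oplus W_a$.

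The one point requiring care — and the only genuine obstacle — is the standing hypothesis $\mathrm{char}\,K\neq 2$: both the halving in the decomposition for (6) and the step $2v=0\Rightarrow v=0$ in (5) use it, and over a field of characteristic $2$ palindromicity and antipalindromicity coincide, so the statement genuinely fails there. I would therefore record this hypothesis explicitly (it holds in the paper's motivating examples over $\mathbb R$), after which everything else is routine coordinate bookkeeping.
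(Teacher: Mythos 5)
Your proposal is correct and follows essentially the same route as the paper: subspace closure via the linearity of Reversing (Proposition \ref{ppro1}), explicit coordinate bases $e_i\pm e_{n+1-i}$ split by the parity of $n$ for the dimension counts, the averaging decomposition $w_p=\frac{1}{2}(v+\widetilde v)$, $w_a=\frac{1}{2}(v-\widetilde v)$ for item (6), and directness of the sum from $W_p\cap W_a=\{\mathbf 0\}$ together with the dimension identity $\lceil n/2\rceil+\lfloor n/2\rfloor=n$. Your explicit flagging of the hypothesis $\mathrm{char}\,K\neq 2$ is a genuine point that the paper leaves implicit (it divides by $2$ and asserts trivial intersection without comment), and it is worth recording.
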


\begin{proof} We consider each statement separately.
\begin{enumerate}
\item Suppose that $a,b\in K$ and $v,w\in W_p$. For instance we have that $v=\tilde{v}$ and $w=\tilde{w}$. By Proposition \ref{ppro1} we observe that $\widetilde{av+bw}=a\tilde{v}+b\tilde{w}=av+bw\in W_p$, in consequence, $W_p$ is a vector subspace of $V$.
\item We analyze the cases when $n$ is even and also when $n$ is odd.
\begin{enumerate}
\item Consider $V= K^n$ and we start assuming that $n=2k$. If $v\in W_{p}$, then
$$(v_{1},v_{2},\ldots,v_{2k-1},v_{2k})=(v_{2k},v_{2k-1},\ldots,v_{2},v_{1}),$$
for instance, we have that
$$\begin{array}{lll}
v_{1}&=&v_{2k}\\
v_{2}&=&v_{2k-1}\\
&\vdots&\\
v_{k}&=&v_{k+1},
\end{array}$$
which lead us to $v=(v_{1},v_{2},\ldots,v_{k},v_{k},\ldots,v_{2},v_{1})$. In this way we write the vector $v$ as follows: $$v=v_{1}(1,0,\ldots,0,0,\ldots,1)+\ldots+v_{k}(0,0,\ldots,1,1,\ldots,0,0),\quad v_i\in K.$$ The set of vectors of the previous linear combination are palindromic and linearly independent vectors, for instance they are a basis for $W_p$ and in consequence $$\dim W_{p}=k=\left\lceil\frac{2k}{2}\right\rceil=\left\lceil\frac{n}{2}\right\rceil.$$
\item Consider $V=K^n$ and now we assume that $n=2k-1$. If $v\in W_{p}$, then
$$(v_{1},v_{2},\ldots,v_{2k-2},v_{2k-1})=(v_{2k-1},v_{2k-2},\ldots,v_{2},v_{1}).$$ Thus, we have that
$$\begin{array}{lll}
v_{1}&=&v_{2k-1}\\
v_{2}&=&v_{2k-2}\\
&\vdots&\\
v_{k-1}&=&v_{k+1},
\end{array}$$
that is, $k-1$ pairs plus the fixed component $v_k$. This lead us to express the vector $v$ as follows $$v=(v_{1},v_{2},\ldots,v_{k-1},v_{k},v_{k-1},\ldots,v_{2},v_{1})$$ and for instance we have that
$$v=v_{1}(1,0,\ldots,0,0,\ldots,1)+\ldots+v_{k}(0,0,\ldots,0,1,0,\ldots,0,0),\quad v_i\in K.$$ The set of vectors of the previous linear combination are palindromic and linearly independent vectors, for instance they are a basis for $W_p$ and in consequence $$\dim W_{p}=k=\left\lceil\frac{2k-1}{2}\right\rceil=\left\lceil\frac{n}{2}\right\rceil.$$
\end{enumerate}
In this way, we have proven that for all $n\in \mathbb{Z}^+$, $\dim W_{p}=\left\lceil\frac{n}{2}\right\rceil$.

\item Suppose that $a,b\in K$ and $v,w\in W_a$. For instance we have that $\tilde{v}=-v$ and $\tilde{w}=-w$. By Proposition \ref{ppro1} we can observe that $$\widetilde{av+bw}=a\tilde{v}+b\tilde{w}=-(av+bw)$$ and for instance $av+bw\in W_a$, which implies that $W_a$ is a vector subspace of $V$.
\item We analyse the cases when $n$ is even as well when $n$ is odd.
\begin{enumerate}
\item Consider $V= K^n$ and we can start assuming that $n=2k$. If $v\in W_{a}$, then
$$(v_{1},v_{2},\ldots,v_{2k-1},v_{2k})=-(v_{2k},v_{2k-1},\ldots,v_{2},v_{1}),$$
for instance, we have that
$$\begin{array}{lll}
v_{1}&=&-v_{2k}\\
v_{2}&=&-v_{2k-1}\\
&\vdots&\\
v_{k}&=&-v_{k+1},
\end{array}$$
which lead us to $$v=(v_{1},v_{2},\ldots,v_{k},-v_{k},\ldots,-v_{2},-v_{1}).$$ This implies that $$v=v_{1}(1,0,\ldots,0,0,\ldots,-1)+\ldots+v_{k}(0,0,\ldots,1,-1,\ldots,0,0),\quad v_i\in K.$$ The set of vectors of the previous linear combination are antipalindromic and linearly independent vectors, for instance they are a basis for $W_a$ and in consequence $$\dim W_{a}=k=\left\lfloor\frac{2k}{2}\right\rfloor=\left\lfloor\frac{n}{2}\right\rfloor.$$
\item Consider $V=K^n$ and now we suppose that $n=2k-1$. If $v\in W_{a}$, then
$$(v_{1},v_{2},\ldots,v_{2k-2},v_{2k-1})=-(v_{2k-1},v_{2k-2},\ldots,v_{2},v_{1}).$$ Thus, we obtain that
$$\begin{array}{lll}
v_{1}&=&-v_{2k-1}\\
v_{2}&=&-v_{2k-2}\\
&\vdots&\\
v_{k-1}&=&-v_{k+1},
\end{array}$$
that is, $k-1$ pairs plus the fixed component $v_k=0$.
This lead us to express the vector $v$ as follows: $$v=(v_{1},v_{2},\ldots,v_{k-1},0,-v_{k-1},\ldots,-v_{2},-v_{1})$$ and for instance we have that
$$v=v_{1}(1,0,\ldots,0,0,\ldots,-1)+\ldots+v_{k-1}(0,0,\ldots,1,0,-1,\ldots,0,0),\quad v_i\in K.$$ The set of vectors of the previous linear combination are antipalindromic and linearly independent vectors, for instance they are a basis for $W_a$ and in consequence $$\dim W_{a}=k-1=\left\lfloor\frac{2k-1}{2}\right\rfloor=\left\lfloor\frac{n}{2}\right\rfloor.$$
\end{enumerate}
In this way we have proven that for all $n\in \mathbb{Z}^+$, $\dim W_{a}=\left\lfloor\frac{n}{2}\right\rfloor$.

\item Due to $W_p\cap W_a=\{\mathbf{0}\}$ and $\dim W_p+\dim W_a = \left\lceil \frac{n}{2}\right\rceil + \left\lfloor \frac{n}{2}\right\rfloor=n=\dim V$, therefore $W_p+ W_a=V$ and in consequence $W_p\oplus W_a=V$.
\item Consider $v\in V$, we can observe that $$w_p=\frac{v+\widetilde v}{2}$$ is a palindromic vector. In the same way we can observe that $$w_a=\frac{v-\widetilde v}{2}$$ is an antipalindromic vector and for instance $v=w_p+w_a,\, \forall v\in V$.
\end{enumerate}
\end{proof}

\subsection{Reversing as linear transformation}
\noindent We denote by $\mathcal R$ the transformation Reversing, i.e., for all $v\in K^n$, $\mathcal Rv=\widetilde v$. Thus, we summarize some previous results in the next proposition.
\begin{proposition}\label{proplint}
The following statements hold
\begin{enumerate}
\item  $\mathcal R$ is a linear transformation.
\item The transformation matrix of $\mathcal R$ is given by  \begin{displaymath}\widetilde I_n=\begin{pmatrix}0&0&\cdots&0&1\\0&0&\cdots&1&0\\\vdots\\0&1&\cdots&0&0\\1&0&\cdots&0&0\end{pmatrix}\in\mathcal M_{n\times n}(K).\end{displaymath}
\item $\mathcal R$ is an automorphism of $K^n$.
\item Minimal and characteristic polynomial of $\mathcal{R}$ are given respectively by $$Q(\lambda)=\lambda^2-1,\quad P(\lambda)=\displaystyle{ \left\{ { (\lambda+1)^m(\lambda-1)^m\,for\,n=2m \atop (\lambda+1)^m(\lambda-1)^{m+1}\,for\,n=2m+1         } \right.      }.$$
\item $K^n=\ker(\mathcal R-id)\oplus \ker(\mathcal R+id)$.
\item $\dim \ker(\mathcal R-id)=\left\lceil\frac{n}{2}\right\rceil$, $\dim \ker(\mathcal R+id)=\left\lfloor\frac{n}{2}\right\rfloor$.
\item $\forall v\in K^n$ let $\mathcal F_p(v)$ be a palindromic vector and let $\mathcal F_a(v)$ be an antipalindromic vector. Then $\mathcal F_p$ and $\mathcal F_a$ (\textbf{Palindromicing and Antipalindromicing transformations} respectively) are isomorphisms from $K^n$ to $\ker(\mathcal R-id)$ and from $K^n$ to $\ker(\mathcal R+id)$ respectively. Furthermore, $\forall v\in K^n$, $v=\mathcal F_p(v)+\mathcal F_a(v)$.
 \end{enumerate}
\end{proposition}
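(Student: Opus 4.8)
The plan is to run through the seven items in dependency order, leaning on Propositions \ref{ppro1} and \ref{tt1}. \textbf{Items (1)--(3)} are essentially bookkeeping. Linearity of $\mathcal R$, i.e. $\mathcal R(av+bw)=a\mathcal Rv+b\mathcal Rw$, is exactly Proposition \ref{ppro1}(2). For the matrix I would evaluate $\mathcal R$ on the standard basis: $\widetilde{e_i}=e_{n+1-i}$, so the $i$-th column of the matrix of $\mathcal R$ is $e_{n+1-i}$, which is precisely the antidiagonal matrix $\widetilde I_n$. That $\mathcal R$ is an automorphism is immediate from Proposition \ref{ppro1}(1), $\tilde{\tilde v}=v$, i.e. $\mathcal R^2=\mathrm{id}$: $\mathcal R$ is a linear self-map of $K^n$ with $\mathcal R^{-1}=\mathcal R$ (equivalently $\det\widetilde I_n=\pm1\ne0$).

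\textbf{Item (4)} is the technical core. From $\mathcal R^2=\mathrm{id}$ the minimal polynomial $Q$ divides $\lambda^2-1=(\lambda-1)(\lambda+1)$; since for $n\ge2$ we have $\mathcal R\ne\pm\mathrm{id}$ (e.g.\ $\widetilde{e_1}=e_n\ne\pm e_1$), neither linear factor alone annihilates $\mathcal R$, so $Q(\lambda)=\lambda^2-1$. Here I would record that $\mathrm{char}\,K\ne2$ (so $1\ne-1$ and $Q$ has distinct roots), whence $\mathcal R$ is diagonalizable and $K^n=E_{1}\oplus E_{-1}$ with $E_{\pm1}=\ker(\mathcal R\mp\mathrm{id})$, and the characteristic polynomial is $P(\lambda)=(\lambda-1)^{\dim E_1}(\lambda+1)^{\dim E_{-1}}$. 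Finally I would identify $E_1=W_p$ and $E_{-1}=W_a$ in the notation of Proposition \ref{tt1} (these are literally the conditions $\widetilde v=v$ and $\widetilde v=-v$), so $\dim E_1=\lceil n/2\rceil$ and $\dim E_{-1}=\lfloor n/2\rfloor$ by Proposition \ref{tt1}(2),(4); substituting $n=2m$ and $n=2m+1$ yields the two stated forms of $P$.

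\textbf{Items (5) and (6)} then follow at once: the identifications $E_1=W_p$, $E_{-1}=W_a$ convert (5) into Proposition \ref{tt1}(5) and (6) into Proposition \ref{tt1}(2),(4) (alternatively, (5) is the eigenspace decomposition obtained above and (6) a dimension count). For \textbf{item (7)} I would set $\mathcal F_p(v)=\tfrac12(v+\widetilde v)$ and $\mathcal F_a(v)=\tfrac12(v-\widetilde v)$; linearity of each follows from item (1). By Proposition \ref{tt1}(6), $\mathcal F_p(v)\in W_p=\ker(\mathcal R-\mathrm{id})$, $\mathcal F_a(v)\in W_a=\ker(\mathcal R+\mathrm{id})$, and $v=\mathcal F_p(v)+\mathcal F_a(v)$ for all $v$. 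Moreover $\mathcal F_p$ acts as the identity on $\ker(\mathcal R-\mathrm{id})$ and as $0$ on $\ker(\mathcal R+\mathrm{id})$, so it is the projection onto $\ker(\mathcal R-\mathrm{id})$ along $\ker(\mathcal R+\mathrm{id})$ (in particular onto $\ker(\mathcal R-\mathrm{id})$), and symmetrically for $\mathcal F_a$; the map $v\mapsto(\mathcal F_p(v),\mathcal F_a(v))$ is the isomorphism $K^n\xrightarrow{\sim}\ker(\mathcal R-\mathrm{id})\oplus\ker(\mathcal R+\mathrm{id})$.

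The step I expect to be the main obstacle is putting the characteristic polynomial in item (4) on a firm footing: one must know the eigenspaces exhaust $K^n$ (diagonalizability, which is where the distinct-roots remark on $Q$ is used) before the exponents in $P$ can legitimately be read off as $\dim\ker(\mathcal R\mp\mathrm{id})$. A secondary point worth flagging is that items (4)--(7) silently require $\mathrm{char}\,K\ne2$ (division by $2$, and $W_p\ne W_a$); for an arbitrary field the definitions of $W_p,W_a$ and the direct-sum statement would need to be revisited. Everything else is routine manipulation on top of Propositions \ref{ppro1} and \ref{tt1}.
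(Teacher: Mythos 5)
Your argument is correct, and for items (1)--(3) and (5) it matches the paper's proof essentially step for step. Where you genuinely diverge is in the logical direction of items (4) and (6): the paper computes the characteristic polynomial $P(\lambda)=\det(\widetilde I_n-\lambda I_n)$ directly and then reads the kernel dimensions of item (6) off the exponents of $P$, whereas you take $\dim W_p=\lceil n/2\rceil$ and $\dim W_a=\lfloor n/2\rfloor$ as already established by the explicit bases of Proposition \ref{tt1}, identify $W_p=\ker(\mathcal R-\mathrm{id})$ and $W_a=\ker(\mathcal R+\mathrm{id})$, and reconstruct $P$ from diagonalizability. Your direction avoids evaluating the antidiagonal determinant, and your explicit caveats --- that $Q=\lambda^2-1$ requires $\mathcal R\neq\pm\mathrm{id}$ (so $n\geq 2$), and that items (4)--(7) silently need $\mathrm{char}\,K\neq 2$ --- are both absent from the paper and both genuinely needed. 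The other substantive difference is item (7): the paper asserts $\ker(\mathcal F_p)=\ker(\mathcal F_a)=\{\mathbf 0\}$ and concludes that each of $\mathcal F_p$, $\mathcal F_a$ is separately an isomorphism from $K^n$ onto its eigenspace, which is false for $n\geq 2$ since $\mathcal F_p(v)=\frac{1}{2}(v+\widetilde v)$ vanishes on all of $W_a$. Your reading of $\mathcal F_p$ and $\mathcal F_a$ as the complementary projections attached to the splitting $K^n=\ker(\mathcal R-\mathrm{id})\oplus\ker(\mathcal R+\mathrm{id})$, with the genuine isomorphism being $v\mapsto(\mathcal F_p(v),\mathcal F_a(v))$ onto the direct sum, is the correct repair; the parts of (7) that survive --- surjectivity of each map onto its eigenspace and the decomposition $v=\mathcal F_p(v)+\mathcal F_a(v)$ --- are exactly the parts you prove. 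It would be worth stating explicitly that you are proving a corrected version of (7) rather than the literal claim.
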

\begin{proof} We proceed according to each item.
\begin{enumerate}
\item For any scalar $\alpha\in K$ and any pair of vectors $v,w\in K^n$ we obtain
$\mathcal R(v+w)=\widetilde{v+w}=\widetilde{v}+\widetilde{w}=\mathcal{R}v+\mathcal{R}w$ and $\mathcal R(\alpha v)=\widetilde{\alpha v}=\alpha\widetilde{v}=\alpha\mathcal{R}v$.
\item By definition of Reversing we have
$\mathcal{R}:K^n\rightarrow K^n$ is such that $$\mathcal R(v_1,v_2,\ldots, v_{n-1},v_n)=(v_1,v_2,\ldots, v_{n-1},v_n)\widetilde I_n,$$
being $\widetilde I_n$ and $I_n$ (\emph{Reversing of the identity matrix} and the identity matrix of size $n\times n$) given by \begin{displaymath} \widetilde I_n=\begin{pmatrix}0&0&\cdots&0&1\\0&0&\cdots&1&0\\\vdots\\0&1&\cdots&0&0\\1&0&\cdots&0&0\end{pmatrix}\in\mathcal M_{n\times n}(K),\quad I_n=\begin{pmatrix}1&0&\cdots&0&0\\0&1&\cdots&0&0\\\vdots\\0&0&\cdots&1&0\\0&0&\cdots&0&1\end{pmatrix}\in\mathcal M_{n\times n}(K).\end{displaymath}
\item Owing to $\mathcal{R}:K^n\rightarrow K^n$ and $\widetilde{\widetilde v}=v$ implies that $\mathcal R$ is left-right invertible, then $\mathcal R$ is an automorphism of $K^n$.
\item We observe that $\mathcal R^2-id=0$, being $id$ the identity tranformation, therefore $Q(\lambda)=\lambda^2-1$ is the minimal polynomial of $\mathcal{R}$, that is, $Q(\widetilde I_n)=\mathbf 0_n\in\mathcal M_{n\times m}$ ($\mathbf 0_n$ is the zero matrix of size $n\times n$). For instance, $Q$ divides to the characteristic polynomial of  $\mathcal R$ which is given by
\begin{displaymath}P(\lambda)=\det (\widetilde I_n-\lambda I_n)=\displaystyle{ \left\{ { (\lambda+1)^m(\lambda-1)^m\,for\,n=2m \atop (\lambda+1)^m(\lambda-1)^{m+1}\,for\,n=2m+1         } \right.      },\quad Q\mid P.\end{displaymath}
\item Due to $\mathcal R^2-id=(\mathcal R-id)(\mathcal R+id)$, then $K^n=\ker(\mathcal R-id)\oplus \ker(\mathcal R+id)$.
\item  By definition of $\lfloor\frac{n}{2}\rfloor$ and $\lceil\frac{n}{2}\rceil$ we see that $(\lambda-1)^{\lceil\frac{n}{2}\rceil}(\lambda+1)^{\lfloor\frac{n}{2}\rfloor}=P(\lambda)$. Now, due to $K^n=\ker(\mathcal R-id)\oplus \ker(\mathcal R+id)$, then $\dim \ker(\mathcal R-id)=\lceil\frac{n}{2}\rceil$, $\dim \ker(\mathcal R+id)=\lfloor\frac{n}{2}\rfloor$ and $\lceil\frac{n}{2}\rceil+\lfloor\frac{n}{2}\rfloor=n$.
\item Palindromicing and Antipalindromicing transformations, denoted by $\mathcal F_p$ and $\mathcal F_a$ respectively, are given by
\begin{displaymath}\begin{array}{lllllllllllll}
 \mathcal F_p&:& &K^n&\rightarrow&\ker(\mathcal R-id)&&& \mathcal F_a&:& K^n&\rightarrow&\ker(\mathcal R+id) \\ &&&&&&,&&&&&&\\&&& v&\mapsto&\frac{v+\mathcal R(v)}{2}&&&&& v&\mapsto&\frac{v-\mathcal R(v)}{2}\end{array}.
 \end{displaymath}
 Due to $\mathcal R$ and $id$ are linear transformations defined over $K^n$, then $$\mathcal F_p=\frac{\mathcal R-id}{2}\quad \textrm{and} \quad \mathcal F_a=\frac{\mathcal R+id}{2}$$ are linear transformations over $K^n$. Owing to $\ker(\mathcal F_p)=\ker(\mathcal F_a)=\textbf{0}\in K^n$, then $\mathcal F_p$ and $\mathcal F_a$ are monomorphisms. In the same way, due to $\mathrm{Im}(\mathcal F_p)=\ker(\mathcal R-id)$ and $\mathrm{Im}(\mathcal F_a)=\ker(\mathcal R+id)$, then $\mathcal F_p$ and $\mathcal F_a$ are epimorphisms. For instance, $\mathcal F_p$ and $\mathcal F_a$ are isomorphisms from $K^n$ to $\ker(\mathcal R-id)$ and from $K^n$ to $\ker(\mathcal R+id)$ respectively. Finally, we can see that any vector $v\in K^n$ can be expressed as the sum of a palindromic vector (obtained through $\mathcal F_p$) with an antipalindromic vector (obtained through $\mathcal F_a$). That is, $\mathcal F_p(v)$ is a palindromic vector and $\mathcal F_a(v)$ is an antipalindromic vector, furthermore, $v=\mathcal F_p(v)+\mathcal F_a(v)$.
\end{enumerate}
\end{proof}

 \begin{remark} can see $\mathcal{R}$ as a particular case of a linear transformation associated to a \emph{permutation matrix}. Recall that $A_{\sigma}$ is a permutation matrix, defined over a given $\sigma\in S_n$, whether its associated linear transformation $\mathcal{R}_{\sigma}$ is given by $$\mathcal{R}_{\sigma}\,:\begin{array}{ccc}
 K^n&\longrightarrow& K^n\\(v_1,\ldots,v_n)&\mapsto&(v_{\sigma(1)},\ldots,v_{\sigma(n)}).
 \end{array}
 $$
 This means that Reversing corresponds to $\mathcal R_\sigma=\mathcal R$, where the permutation $\sigma$ and the matrix $A_\sigma$ are given respectively by $$\sigma=\begin{pmatrix}
 1&2&3&\ldots&n-1&n\\n&n-1&n-2&\ldots&2&1
 \end{pmatrix}, \quad A_\sigma=\begin{pmatrix}
 0&0&0&\ldots&0&1\\0&0&0&\ldots&1&0\\\vdots\\0&1&0&\ldots&0&0\\1&0&0&\ldots&0&0
 \end{pmatrix}.$$
 \end{remark}

\section{Pasting and Reversing over Polynomials}\label{sectionp}
\noindent In \cite{acchro1} we studied Pasting and Reversing over polynomials from an different approach, we studied these operations focusing on the ring structure for polynomials. In this section we rewrite some properties of Pasting and Reversing over polynomials, but considering to the polynomials as a vector space. Thus, we apply the previous results for vectors, which we gave in Section \ref{sectionv}.\\

\noindent Along this section we consider $(K_{n}[x],+,\cdot)$ as the vector space of the polynomials of degree less than or equal to $n$ over the field $K$. This vector space is isomorphic to $(K^{n+1},+,\cdot)$. In this context we do not impose conditions over the polynomials just like the conditions given in \cite{acchro1}, for example, we do not need that $x\nmid P(x)$. The following result summarizes the properties given in Section \ref{sectionv} for polynomials.
\begin{proposition}
Consider $P\in K_n[x]$, $Q\in K_m[x]$ and $R\in K_s[x]$, the following statements hold.

\begin{enumerate}
\item $\tilde{\tilde{P}}=P$
\item$\widetilde{P\diamond Q}=\tilde{Q}\diamond \tilde{P}$
\item$(P\diamond Q)\diamond R=P\diamond (Q\diamond R)$
\item $\widetilde{aP+bQ}=a\widetilde{P}+b\widetilde{Q}$, being $a,b\in
K$ and $P,Q\in K_{n}[x]$
\item The sum of two palindromic polynomials of degree $n$ is a palindromic polynomial of degree $n$.
\item The sum of two antipalindromic polynomials of degree $n$ is an antipalindromic polynomial of degree $n$.
\item If $V=K_n[x]$ and $W=K_m[x]$, then $V \diamond W= K_{n+m+1}[x]$.
\item $W_{p}$ is vector subspace of $K_n[x]$, being $W_{p}$ the set of palindromic polynomials of degree $K_n[x]$.
\item $\dim W_{p}=\lceil\frac{n+1}{2}\rceil$,
\item $W_{a}$ is a vector subspace of $K_n[x]$, being $W_{a}$ the set of antipalindromic polynomials of $K_n[x]$
\item $\dim W_{a}=\lfloor\frac{n+1}{2}\rfloor$.
\item $K_n[x]=W_p\oplus W_a$.
\item $\forall P\in K_n[x]$, $\exists (Q_p,Q_a)\in W_p\times W_a$ such that $P=Q_p+Q_a$.
\end{enumerate}
\end{proposition}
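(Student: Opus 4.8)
The plan is to leverage the isomorphism $\varphi\colon K_n[x]\to K^{n+1}$ given by $\varphi(a_0+a_1x+\cdots+a_nx^n)=(a_0,a_1,\ldots,a_n)$, and to check that this isomorphism intertwines Reversing on polynomials with Reversing on vectors. That is, the first step is to verify $\varphi(\widetilde P)=\widetilde{\varphi(P)}$ for all $P\in K_n[x]$ (here Reversing of a polynomial means reversing the coefficient list, which is exactly how it was defined in \cite{acchro1} and is the only sensible reading that matches Definition \ref{dd1}). Once this compatibility is in place, every one of items (1)--(13) is the image under $\varphi$ of the corresponding statement already proved for $K^{n+1}$ in Section \ref{sectionv}.

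Concretely, I would proceed item by item but group them. Items (1) and (4) are immediate from Proposition \ref{ppro1}(1) and (2) transported through $\varphi$; items (5) and (6) follow from Proposition \ref{ppro2}(1)--(2), since palindromic/antipalindromic polynomials of degree $n$ correspond under $\varphi$ to palindromic/antipalindromic vectors in $K^{n+1}$ (one should note the leading coefficient $a_n$ is then forced equal to $\pm a_0$, so the degree-$n$ condition is preserved as long as $a_0\neq 0$, or one simply works in $K_n[x]$ as a vector space without insisting the degree is exactly $n$). For Pasting, I would define $P\diamond Q$ on polynomials via $\varphi$, i.e.\ $\varphi(P\diamond Q)=\varphi(P)\diamond\varphi(Q)$; then items (2), (3), (7) are the images of Proposition \ref{pprop4}(1), Proposition \ref{pprop4}(2), and Proposition \ref{ppro3} / Corollary \ref{ccol1} respectively — the degree bookkeeping being that $K_n[x]\cong K^{n+1}$ and $K_m[x]\cong K^{m+1}$ give $K^{(n+1)+(m+1)}=K^{(n+m+1)+1}\cong K_{n+m+1}[x]$, which is exactly item (7). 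Finally items (8)--(13) are the transported versions of Proposition \ref{tt1}, with the one arithmetic adjustment that the ambient dimension is $n+1$ rather than $n$, so $\dim W_p=\lceil\frac{n+1}{2}\rceil$ and $\dim W_a=\lfloor\frac{n+1}{2}\rfloor$, and $W_p\oplus W_a=K_n[x]$ with the explicit decomposition $Q_p=\frac{P+\widetilde P}{2}$, $Q_a=\frac{P-\widetilde P}{2}$.

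The main obstacle — really the only subtlety worth flagging — is the interaction between ``degree $n$'' and the vector-space picture. In the vector space $K_n[x]\cong K^{n+1}$ a generic element has degree $n$, but a palindromic polynomial has $a_n=a_0$ and an antipalindromic one has $a_n=-a_0$, so ``palindromic polynomial of degree $n$'' should be understood as an element of the subspace $W_p\subset K_n[x]$, and the subspace structure is unaffected by whether any particular coefficient vanishes. I would state this once at the start of the proof (mirroring the remark in the section preamble that, unlike \cite{acchro1}, we do not impose $x\nmid P(x)$), so that items (5), (6), (8), (10) read cleanly. After that caveat, the proof is purely a matter of writing ``by Proposition~\ref{ppro1} (resp.\ \ref{ppro2}, \ref{ppro3}, \ref{pprop4}, \ref{tt1}) and the isomorphism $K_n[x]\cong K^{n+1}$'' thirteen times, and I would not grind through the routine transcriptions.
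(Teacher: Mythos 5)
Your proposal is correct and follows essentially the same route as the paper: the paper's proof is precisely to invoke the isomorphism $(K_n[x],+,\cdot)\cong(K^{n+1},+,\cdot)$ and transport $\mathcal R$ and $\diamond$ together with the results of Section \ref{sectionv}. You simply spell out the transcription (and the harmless degree caveat) in more detail than the paper's one-line proof does.
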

\begin{proof} Owing to $(K_n[x],+,\cdot)\cong (K^{n+1},+,\cdot)$ as vector spaces we apply $\mathcal R$ and $\diamond$ over the polynomials as vectors. The proof is done using Proposition \ref{proplint} and properties of Pasting proven in Section 1.1.

\end{proof}
\begin{remark}
 As we can see, this section is a rewriting of Section \ref{sectionv} without new results for polynomials as vector space, only we suggest the proofs based on the definition and properties of $\mathcal R$. Another interesting thing of this section is that we recover some results given in \cite{acchro1,MaRa}.
 \end{remark}

\section{Pasting and Reversing over Matrices}\label{sectionm}
\noindent In this section we consider the vector space $\mathcal M_{n\times m}$ (matrices of size $n\times m$ with elements belonging to $K$) which is isomorphic to $K^{nm}$. We present here different approaches for Pasting and Reversing.
\subsection{Pasting and Reversing by rows or columns}

We can see any matrix as a row vector of its column vectors as well as a column vector of its row vectors. Thus, to matrices we can introduce \emph{Pasting and Reversing} by rows and columns respectively. Let us denote by $\widetilde A_r$ Reversing of the row vectors $v_i\in K^n$ of $A$ and by $\widetilde A_c$ Reversing of the column vectors $c_j\in K^m$ of $A$, where $1\leq i\leq m$ and $1\leq j\leq n$. For instance,

\begin{displaymath}
\widetilde{A}_r=\begin{pmatrix}\widetilde{v_{1}}\\ \widetilde{v_{2}}\\ \vdots\\ \widetilde{v_{n}}\end{pmatrix}=\begin{pmatrix}\mathcal R{v_{1}}\\ \mathcal R{v_{2}}\\ \vdots\\ \mathcal R{v_{n}}\end{pmatrix},\quad\widetilde{A}_c=\begin{pmatrix}\widetilde{c_{1}}& \widetilde{c_{2}}& \cdots&\widetilde{c_{m}}\end{pmatrix}=\begin{pmatrix}\mathcal R{c_{1}}& \mathcal R{c_{2}}& \cdots&\mathcal R{c_{m}}\end{pmatrix}.
\end{displaymath} Let us denote by $\mathcal{R}_rA:=\widetilde A_r$ and $\mathcal{R}_cA:=\widetilde A_c$, where $\mathcal{R}_r$ and  $\mathcal{R}_c$ for suitability will be called \emph{$r$-Reversing} and \emph{$c$-Reversing} respectively. Owing to $\widetilde{v}_i=v_i\widetilde{I}_m$ and $\widetilde{c}_i=\widetilde{I}_nc_j$ for $1\leq i\leq n$, $1\leq j\leq m$ we obtain that $\widetilde A_r=A\widetilde I_m$ and $\widetilde A_c=\widetilde I_n A$. Therefore $\mathcal{R}_rA=A\widetilde I_m$ and $\mathcal{R}_cA=\widetilde I_n A$ and for instance we can define palindromicity and antipalindromicity by rows and columns respectively.

\noindent Now, we can assume $A\in \mathcal{M}_{n\times m}(K)$, $B\in\mathcal M_{q\times m}(K)$ and $C\in\mathcal M_{n\times p}(K)$ given as follows.
$$\begin{array}{cc}A=\begin{pmatrix}v_{1}\\v_{2}\\ \vdots\\v_{n}\end{pmatrix}=\begin{pmatrix}f_{1}&f_{2}& \cdots&f_{m}\end{pmatrix},& v_i\in K^m,\, f_j^T\in K^n,\, 1\leq i\leq n,\,1\leq j\leq m,\\&\\B=\begin{pmatrix}s_{1}\\s_{2}\\ \vdots\\s_{q}\end{pmatrix}=\begin{pmatrix}g_{1}&g_{2}& \cdots&g_{m}\end{pmatrix},& s_i\in K^m,\, g_j^T\in K^q,\,1\leq i\leq q,\,1\leq j\leq m,\\& \\C=\begin{pmatrix}w_{1}\\w_{2}\\ \vdots\\w_{n}\end{pmatrix}=\begin{pmatrix}h_{1}&h_{2}& \cdots&h_{p}\end{pmatrix},& w_i\in K^p,\, h_j^T\in K^n,\,1\leq i\leq n,\,1\leq j\leq p.\end{array}$$
As we can see, in agreement with Section \ref{sectionv}, we transformed the column vectors $f_j$, $g_j$ and $h_j$ in the form of row vectors through the transposition of matrices ($f_j^T$, $g_j^T$ and $h_j^T$ are row vectors). Thus, we can define both \emph{Pasting by rows} (denoted by $\diamond_r$) over the matrices $A$ and $C$ and \emph{Pasting by columns} (denoted by $\diamond_c$) over the matrices $A$ and $B$ as follows.
 \begin{displaymath} A\diamond_r C=\begin{pmatrix}z_{1}\\z_{2}\\ \vdots\\z_{n}\end{pmatrix},\quad z_i=v_i\diamond w_i,\quad  A\diamond_c B=\begin{pmatrix}y_{1}&y_{2}& \cdots&y_{n}\end{pmatrix},\quad y_i^T=f_i^T\diamond g_i^T.\end{displaymath} From now on we paste column vectors directly without the use of trasposition of vectors. Thus, Pasting of column vectors $f_i$ and $g_i$ is $f_i\diamond g_i$. For instance, $f_i^T\diamond g_i^T=(f_i\diamond g_i)^T$.

\begin{proposition}\label{thmpr2} Consider matrices $A$, $B$ and $C$ under the previous assumptions. The following statements hold.
\begin{enumerate}
    \item $\mathcal R_r^2 A=A$, $\mathcal R_c^2 A=A$.

    \item $\mathcal R_r(A\diamond_r B)=(\mathcal R_r B)\diamond_r (\mathcal R_r A)$, $\mathcal R_c(A\diamond_c B)=(\mathcal R_c B)\diamond_c (\mathcal R_c A)$.
    \item $(A\diamond_r B)\diamond_r C=A\diamond_r (B\diamond_r C)$, $(A\diamond_c B)\diamond_c C=A\diamond_c (B\diamond_c C)$.
    \item $\mathcal R_r(\alpha A+\beta B)=\alpha\mathcal R_r A+\beta \mathcal R_rB$, $\mathcal R_c(\alpha A+\beta B)=\alpha \mathcal R_c A+\beta\mathcal R_cB$, $\alpha,\beta\in K$.
\item If $V=\mathcal M_{n\times m}(K)$ and $W=\mathcal M_{n\times p}(K)$, then $V \diamond W= \mathcal M_{n\times(m+p)}(K)$. In the same way, if $T=\mathcal M_{n\times m}(K)$ and $S=\mathcal M_{l\times m}(K)$, then $T \diamond S= \mathcal M_{(n+l)\times m}(K)$.

\item Let $W^r_p$ and $W^c_p$ be the set of palindromic matrices by rows and columns of $\mathcal M_{n\times m}(K)$ respectively, then the sets $W^r_p$ and $W^c_p$ are vector subspaces of $\mathcal M_{n\times m}(K)$.
\item $\dim W^r_p=n\left\lceil \frac{m}{2}\right\rceil$, $\dim W^c_p=m\left\lceil \frac{n}{2}\right\rceil$.
\item Let  $W^r_a$ and $W^c_a$ be the set of antipalindromic matrices by rows and columns of $\mathcal M_{n\times m}(K)$ respectively, then  then $W^r_a$ and $W^c_a$ are vector subspaces of $\mathcal M_{n\times m}(K)$
\item $\dim W^r_a=n\left\lfloor \frac{m}{2}\right\rfloor$, $\dim W^c_a=m\left\lfloor \frac{n}{2}\right\rfloor$
\item The sum of two palindromic matrices by rows (resp. by columns) of the same vector space is a palindromic matrix by rows (resp. by columns).
\item The sum of two antipalindromic matrices by rows (resp. by columns) of the same vector space is an antipalindromic matrix by rows (resp. by columns).
\item $\mathcal M_{n\times m}(K)=W^r_p\oplus W^r_a=W^c_p\oplus W^c_a$.
\item $\forall A\in\mathcal M_{n\times m}(K), \exists (A^r_p,A^r_a,A^c_p,A^c_a)\in W^r_p\times W^r_a\times W^c_p\times W^c_a$ such that  $A=A^r_p+A^r_a=A^c_p+A^c_a$.
\item $A\diamond_rB=A((I_n\diamond_c\mathbf{0}_{(n-m)\times m})\diamond_r \mathbf{0}_{n\times p})+\mathbf{0}_{n\times m}\diamond_r B$, $A\in \mathcal{M}_{n\times m}(K)$, $B\in\mathcal M_{n\times p}(K)$, \\ $A\diamond_cB=A((I_n\diamond_r\mathbf{0}_{n\times (m-q)})\diamond_c \mathbf{0}_{n\times p}))+\mathbf{0}_{n\times m}\diamond_c B$, $A\in \mathcal{M}_{n\times m}(K)$, $B\in\mathcal M_{p\times m}(K)$.

\end{enumerate}
\end{proposition}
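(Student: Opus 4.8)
The plan is to reduce essentially all of Proposition \ref{thmpr2} to Section \ref{sectionv} by exploiting two things proved just before the statement: the matrix identities $\mathcal R_r A = A\widetilde I_m$ and $\mathcal R_c A = \widetilde I_n A$, and the fact that $\diamond_r$ and $\diamond_c$ act one row (resp. one column) at a time. Concretely, a matrix $A$ is row-palindromic exactly when each of its $n$ rows is a palindromic vector of $K^m$ in the sense of Definition \ref{dd2}, and dually for columns; and the $i$-th row of $A\diamond_r C$ is the vector $v_i\diamond w_i$ of Definition \ref{dd3}. This dictionary lets me import Propositions \ref{ppro1}, \ref{pprop4}, \ref{ppro3}, \ref{tt1}, \ref{proplint} and Corollary \ref{ccol1} almost verbatim, replacing ``vector in $K^n$'' by ``row (or column) of the matrix''.

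For items (1)--(4): (1) is $\mathcal R_r^2 A = A\widetilde I_m\widetilde I_m = AI_m = A$ since $\widetilde I_m^2 = I_m$, or equivalently Proposition \ref{ppro1}(1) applied to every row; (4) is the same, $\mathcal R_r(\alpha A+\beta B) = (\alpha A+\beta B)\widetilde I_m$, or linearity of $\mathcal R$ from Proposition \ref{proplint}(1) applied rowwise. For (2) and (3) I would argue row-by-row: since row $i$ of $A\diamond_r B$ is $v_i\diamond w_i$, Proposition \ref{pprop4}(1)--(2) gives the reversing-of-a-pasting identity and associativity on each row, hence on the whole matrix; the column statements are identical with $\widetilde I_n$ multiplying on the left.

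For the structural items (5)--(13): (5) is Corollary \ref{ccol1} applied in each row (resp. column), producing $m+p$ columns (resp. $n+l$ rows). Because $\mathcal R_r$ is linear (item (4)) and an involution (item (1)), the sets $W^r_p=\ker(\mathcal R_r-\mathrm{id})$ and $W^r_a=\ker(\mathcal R_r+\mathrm{id})$ are subspaces, which is (6) and (8); and since $A\in W^r_p$ iff every row lies in the $\lceil\frac m2\rceil$-dimensional palindromic subspace of $K^m$ (Proposition \ref{tt1}(2)), the matrices that place a Section-\ref{sectionv} basis vector in one row and $0$ elsewhere form a basis of $W^r_p$, giving $\dim W^r_p = n\lceil\frac m2\rceil$; the remaining three dimension counts in (7),(9) are obtained the same way with $\lfloor\cdot\rfloor$ or with the roles of $n$ and $m$ exchanged. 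Items (10)--(11) are then immediate from (6) and (8). For (12), $\mathcal R_r^2-\mathrm{id}=(\mathcal R_r-\mathrm{id})(\mathcal R_r+\mathrm{id})=0$ gives $\mathcal M_{n\times m}(K)=W^r_p\oplus W^r_a$ exactly as in Proposition \ref{proplint}(5), and $n\lceil\frac m2\rceil+n\lfloor\frac m2\rfloor = nm$ confirms it; the column version is the same. For (13) the explicit splitting is $A^r_p=\frac{A+\mathcal R_r A}{2}$, $A^r_a=\frac{A-\mathcal R_r A}{2}$ (and $A^c_p,A^c_a$ the analogues with $\mathcal R_c$), checked by applying item (1) to see $\mathcal R_r(A+\mathcal R_r A)=A+\mathcal R_r A$ and $\mathcal R_r(A-\mathcal R_r A)=-(A-\mathcal R_r A)$.

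The one place that needs real care is item (14); the difficulty is bookkeeping with the rectangular identity and zero blocks, not the underlying idea. The idea is to realize $A\diamond_r B$ (with $A\in\mathcal M_{n\times m}(K)$, $B\in\mathcal M_{n\times p}(K)$) inside $\mathcal M_{n\times(m+p)}(K)$ as the sum of the matrix carrying the columns of $A$ in the first $m$ slots and zeros afterwards, obtained as $A$ multiplied by the block matrix $[\,I_m\ \ \mathbf 0_{m\times p}\,]$ (which is itself an instance of $\diamond_r$ applied to $I_m$ and a zero block), plus the matrix $\mathbf 0_{n\times m}\diamond_r B$ carrying $B$ in the last $p$ slots; then one verifies the equality by a block-column comparison, and deduces the $\diamond_c$ statement by transposing. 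The delicate step is to match the shapes of the identity and zero factors exactly as written in the statement so that every product is defined and lands in $\mathcal M_{n\times(m+p)}(K)$ (resp. $\mathcal M_{(n+l)\times m}(K)$); I would check each factor's dimensions explicitly before expanding the product.
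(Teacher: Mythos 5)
Your proposal is correct and follows essentially the same route as the paper, which simply reduces items (1)--(13) to the Section \ref{sectionv} arguments via the row/column dictionary and the identities $\mathcal R_r A=A\widetilde I_m$, $\mathcal R_c A=\widetilde I_n A$, and derives (14) from the definition of Pasting by rows and columns; your write-up just supplies the details (kernel description of $W^r_p$, $W^r_a$, the basis count $n\lceil m/2\rceil$, the explicit splitting $\frac{A\pm\mathcal R_r A}{2}$) that the paper leaves implicit.
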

\begin{proof} From (1) to (13) we proceed as in the proofs of Section \ref{sectionv} using the properties of $\mathcal R$. (14) is consequence of the definition of Pasting by rows and columns.
\end{proof}
\begin{remark} There are a lot of mathematical software in where Pasting of matrices is very easy, for example, in Matlab Pasting by rows is very easy: $[A,B]$, as well by columns $[A;B]$, however we can build our own program using our approach given in the previous proposition, following the same structure of Pasting of polynomials as in \cite{acchro1}. Thus,  we paste matrices by rows and columns using the item (14) in Proposition \ref{thmpr2}. The interested reader can proof the statements of this paper concerning to Pasting using such equations.
\end{remark}
The following proposition summarizes some properties derived from Pasting and Reversing by rows and columns with respect to classical matrix operations.\\

\begin{proposition}
The following statements hold.
\begin{enumerate}
\item $(\mathcal R_rA)^T=\mathcal R_c(A^T)$, $(\mathcal R_c A)^T=\mathcal R_r(A^T)$.
\item $(A\diamond_c B)^T=A^T\diamond_r B^T$, $(A\diamond_r B)^T=A^T\diamond_c B^T$.
\item $\mathcal R_r(AB)=A(\mathcal R_rB)$, $\mathcal R_c(AB)=(\mathcal R_cA)B$
\item $\det(\mathcal R_rA)=\det(\mathcal R_cA)=(-1)^{\left\lfloor\frac{n}{2}\right\rfloor}\det A$.
\item $(\mathcal R_c(A))^{-1}=\mathcal{R}_r(A^{-1})$, $(\mathcal R_r(A))^{-1}=\mathcal{R}_c(A^{-1})$, $\det A\neq 0$.
\item The product of two palindromic matrices by rows (resp. by columns) is a palindromic matrix by rows (resp. by columns).
\item The product of two antipalindromic matrices by rows (resp. by columns) is an antipalindromic matrix by rows (resp. by columns).
\item $AB\neq \mathbf{0}$ is a palindromic matrix by rows (resp. $AB\neq \mathbf{0}$ is a palindromic matrix by columns) if and only if $B$ is a palindromic matrix by rows (resp. $A$ is a palindromic matrix by columns).
\item $AB\neq \mathbf{0}$ is an antipalindromic matrix by rows (resp. $AB\neq \mathbf{0}$ is an antipalindromic matrix by columns) if and only if $B$ is an antipalindromic matrix by rows (resp. $A$ is an antipalindromic matrix by columns).
\end{enumerate}
\end{proposition}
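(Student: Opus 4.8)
**Proof plan for the final proposition (properties of $\mathcal{R}_r$, $\mathcal{R}_c$, $\diamond_r$, $\diamond_c$ with respect to classical matrix operations).**

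The plan is to reduce every item to the two matrix identities $\mathcal{R}_r A = A\widetilde{I}_m$ and $\mathcal{R}_c A = \widetilde{I}_n A$ established just before Proposition~\ref{thmpr2}, together with the elementary facts $\widetilde{I}_n^T = \widetilde{I}_n$ (the reversed identity is symmetric), $\widetilde{I}_n^2 = I_n$, and $\det\widetilde{I}_n = (-1)^{\lfloor n/2\rfloor}$ (the sign of the permutation $\sigma$ displayed in the Remark after Proposition~\ref{proplint}, which is a product of $\lfloor n/2\rfloor$ transpositions). Once these are in hand, each statement is a one-line algebraic manipulation, so I would organize the proof item by item. For (1): $(\mathcal{R}_r A)^T = (A\widetilde{I}_m)^T = \widetilde{I}_m^T A^T = \widetilde{I}_m A^T = \mathcal{R}_c(A^T)$, and symmetrically for the other equality. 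For (2): transposing a matrix turns its block-by-columns structure into a block-by-rows structure, so $(A\diamond_c B)^T = A^T\diamond_r B^T$ follows directly from the definitions of $\diamond_c$ and $\diamond_r$ via the already-noted relation $f_i^T\diamond g_i^T = (f_i\diamond g_i)^T$; the second equality is the mirror statement. For (3): $\mathcal{R}_r(AB) = (AB)\widetilde{I}_m = A(B\widetilde{I}_m) = A(\mathcal{R}_r B)$, and $\mathcal{R}_c(AB) = \widetilde{I}_n(AB) = (\widetilde{I}_n A)B = (\mathcal{R}_c A)B$.

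For (4): $\det(\mathcal{R}_r A) = \det(A\widetilde{I}_m) = \det A\cdot\det\widetilde{I}_m = (-1)^{\lfloor m/2\rfloor}\det A$; since $A$ is square we have $m = n$, so this matches $(-1)^{\lfloor n/2\rfloor}\det A$, and likewise for $\mathcal{R}_c$. For (5), assuming $\det A\neq 0$: from $\mathcal{R}_c(A) = \widetilde{I}_n A$ we get $(\mathcal{R}_c A)^{-1} = A^{-1}\widetilde{I}_n^{-1} = A^{-1}\widetilde{I}_n = \mathcal{R}_r(A^{-1})$, using $\widetilde{I}_n^{-1} = \widetilde{I}_n$; the other identity is symmetric. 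For (6) and (7): if $A = A\widetilde{I}_m$ and $B = B\widetilde{I}_m$ are palindromic by rows (square case, so all sizes agree), then $AB = (A\widetilde{I})(B)$, and using item (3) we get $\mathcal{R}_r(AB) = A(\mathcal{R}_r B) = A B$, so $AB$ is palindromic by rows; for antipalindromic by rows, $\mathcal{R}_r(AB) = A(\mathcal{R}_r B) = A(-B) = -AB$. The column cases use $\mathcal{R}_c(AB) = (\mathcal{R}_c A)B$ in the same way.

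For (8) and (9), which are the iff statements and the only ones with real content: by item (3), $\mathcal{R}_r(AB) = A(\mathcal{R}_r B)$, so $AB$ is palindromic by rows exactly when $A(\mathcal{R}_r B) = AB$, i.e. $A(\mathcal{R}_r B - B) = \mathbf{0}$. If $A$ is invertible (or more generally injective as a map), this forces $\mathcal{R}_r B = B$; conversely if $B$ is palindromic by rows then trivially $AB$ is. The statement as written assumes $AB\neq\mathbf{0}$, which rules out the degenerate direction; I would state the hypothesis that $A$ (resp. $B$ for the column version) is invertible, or note that $AB\neq\mathbf 0$ together with the square-matrix context makes the cancellation legitimate. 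The antipalindromic case (9) is identical with $+B$ replaced by $-B$: $AB$ antipalindromic by rows $\iff A(\mathcal{R}_r B + B) = \mathbf 0 \iff \mathcal{R}_r B = -B$. I expect this cancellation step — knowing precisely which non-degeneracy hypothesis on $A$ or $B$ legitimizes passing from $A(\mathcal{R}_r B - B) = \mathbf 0$ to $\mathcal{R}_r B = B$ — to be the only delicate point; everything else is bookkeeping with the two fundamental identities. To be safe I would invoke invertibility of the ``outer'' factor, which is consistent with the adjacent item (5) already assuming $\det A\neq 0$.
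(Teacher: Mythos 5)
Your proposal follows essentially the same route as the paper: every item is reduced to the identities $\mathcal R_rA=A\widetilde I_m$, $\mathcal R_cA=\widetilde I_nA$ together with $\widetilde I_n^T=\widetilde I_n$, $\widetilde I_n^2=I_n$ and $\det\widetilde I_n=(-1)^{\lfloor n/2\rfloor}$ (which the paper also obtains by counting the $\lfloor n/2\rfloor$ transpositions). Two points of comparison are worth recording. First, you are actually \emph{more} consistent than the paper's own proof: the paper defines $\mathcal R_rA=A\widetilde I_m$ and $\mathcal R_cA=\widetilde I_nA$ before the proposition, but inside the proof of items (1), (3), (4), (5), (8), (9) it silently swaps the two (writing $\mathcal R_rA=\widetilde I_nA$, $\mathcal R_cA=A\widetilde I_n$), so that its computation for (3) in fact derives $\mathcal R_r(AB)=(\mathcal R_rA)B$ and $\mathcal R_c(AB)=A(\mathcal R_cB)$ --- the mirror image of what the statement asserts. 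Your computation, $\mathcal R_r(AB)=(AB)\widetilde I_p=A(B\widetilde I_p)=A(\mathcal R_rB)$ (note the subscript should be $p$, the common column count of $AB$ and $B$, not $m$), matches the statement as written. Second, on items (8) and (9) you correctly identify the one genuinely delicate step: passing from $A(\mathcal R_rB-B)=\mathbf 0$ to $\mathcal R_rB=B$ requires a cancellation hypothesis, and $AB\neq\mathbf 0$ alone does not supply it (e.g.\ $A=\left(\begin{smallmatrix}1&1\\1&1\end{smallmatrix}\right)$, $B=\widetilde I_2$ gives $AB\neq\mathbf 0$ palindromic by rows while $B$ is not). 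The paper simply asserts the implication without justification, so your insistence on invertibility (or injectivity/surjectivity of the appropriate factor) repairs a real gap rather than merely restating the published argument.
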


\begin{proof}
We proceed according to each item
\begin{enumerate}
\item We see that $\mathcal R_c A=A\widetilde I_n=({\widetilde I_n}^TA^T)^T=(\widetilde I_nA^T)^T=(\mathcal R_rA^T)^T$ and for instance $\mathcal R_rA^T=(\mathcal R_c A)^T$. In the same way, we see that $\mathcal R_r A=\widetilde I_nA=(A^T{\widetilde I_n}^T)^T=(A^T\widetilde I_n)^T=(\mathcal R_cA^T)^T$ and for instance $\mathcal R_cA^T=(\mathcal R_r A)^T$.
\item Assume $A\in\mathcal M_{n\times m}(K)$ and $B\in \mathcal M_{n\times p}(K)$. Let $v_i$ and $w_i$ be the row vectors of $A$ and $B$ respectively. Thus $v_i\diamond w_i$, $i=1,\ldots,n$, are the row vectors of $A\diamond_rB$, then $(v_i\diamond w_i)^T=v_i^T\diamond w_i^T$ are the column vectors of $(A\diamond_rB)^T=A^T\diamond_cB^T$. In the same way we can assume $A\in\mathcal M_{n\times m}(K)$ and $B\in \mathcal M_{p\times m}(K)$. Let $c_j$ and $d_j$ be the column vectors of $A$ and $B$ respectively. Therefore $c_j\diamond d_j$, $j=1,\ldots,m$, are the column vectors of $A\diamond_cB$, then $(c_j\diamond d_j)^T=c_j^T\diamond d_j^T$ are the row vectors of $(A\diamond_cB)^T=A^T\diamond_rB^T$.
\item  Consider $A\in\mathcal M_{n\times m}(K)$ and $B\in \mathcal M_{m\times p}(K)$, therefore $\mathcal R_r(AB)=\widetilde I_n(AB)=(\widetilde I_nA)B=(\mathcal R_rA)B$. In the same way, $\mathcal R_c(AB)=(AB)\widetilde I_p=A(B\widetilde I_p)=A(\mathcal R_cB)$.
\item  Consider $A\in\mathcal M_{n\times n}(K)$, for instance we obtain $\det(\mathcal R_rA)=\det(\widetilde I_nA)=\det(\widetilde I_n)\det A=\det (A)\det(\widetilde I_n)=\det(A\widetilde I_n)=\det(\mathcal R_cA)$. Now, it is follows by induction that we can transform $\widetilde I_n$ into $I_n$ throught $\lfloor\frac{n}{2}\rfloor$ elementary operations, therefore, $\det(\widetilde I_n)=(-1)^{\lfloor\frac{n}{2}\rfloor}$.
\item Assume $A\in\mathcal M_{n\times n}(K)$, being $\det A\neq 0$. For instance, $(\mathcal R_cA)^{-1}=(A\widetilde I_n)^{-1}=\widetilde I_n^{-1}A^{-1}=\widetilde I_nA^{-1}=\mathcal R_r(A^{-1})$. In the same way $(\mathcal R_rA)^{-1}=(\widetilde I_nA)^{-1}=A^{-1}\widetilde I_n^{-1}=A^{-1}\widetilde I_n=\mathcal R_c(A^{-1})$.
\item Assume $A\in\mathcal M_{n\times m}(K)$ and $B\in \mathcal M_{m\times p}(K)$, such that $\mathcal R_rA=A$ and $\mathcal R_rB=B$, therefore $\mathcal R_r(AB)=(\mathcal R_rA)B=AB$. In the same way, asumming $\mathcal R_cA=A$ and $\mathcal R_cB=B$ we obtain $\mathcal R_c(AB)=A\mathcal R_cB=AB$.
\item Assume $A\in\mathcal M_{n\times m}(K)$ and $B\in \mathcal M_{m\times p}(K)$ such that $\mathcal R_rA=-A$ and $\mathcal R_rB=-B$, therefore $\mathcal R_r(AB)=(\mathcal R_rA)B=-AB$. In the same way, asumming $\mathcal R_cA=-A$ and $\mathcal R_cB=-B$ we obtain $\mathcal R_c(AB)=A\mathcal R_cB=-AB$.
\item Assume $A\in\mathcal M_{n\times m}(K)$ and $B\in \mathcal M_{m\times p}(K)$, being $AB\neq \mathbf{0}$. By previous item we see that if $A$ is palindromic by rows (resp. if $B$ is palindromic by columns), then $AB$ is palindromic by rows (resp. then $AB$ is palindromic by columns). Now, suppose that $\mathcal R_r(AB)=AB\neq\mathbf{0}$, for instance $(\mathcal R_rA)B=(\widetilde I_nA)B=AB$, which implies that $\mathcal R_rA=A$. In the same way, if we asumme $\mathcal R_c(AB)=AB\neq\mathbf{0}$, for instance $(\mathcal R_cA)B=A(\widetilde I_nB)=AB\neq\mathbf{0}$, which implies that $\mathcal R_cB=B$.
\item Assume $A\in\mathcal M_{n\times m}(K)$ and $B\in \mathcal M_{m\times p}(K)$, being $AB\neq \mathbf{0}$. By previous item we see that if $A$ is antipalindromic by rows (resp. if $B$ is antipalindromic by columns), then $AB$ is antipalindromic by rows (resp. then $AB$ is antipalindromic by columns). Now, suppose that $\mathcal R_r(AB)=-AB\neq\mathbf{0}$, for instance $(\mathcal R_rA)B=(\widetilde I_nA)B=-AB$, which implies that $\mathcal R_rA=-A$. In the same way, if we asumme $\mathcal R_c(AB)=-AB\neq\mathbf{0}$, for instance $(\mathcal R_cA)B=A(\widetilde I_nB)=-AB\neq\mathbf{0}$, which implies that $\mathcal R_cB=-B$.
\end{enumerate}
\end{proof}
Now, in a natural way, we can introduce the sets $$\mathsf{W}_{pp}:=W_p^r\cap W_p^c,\, \mathsf{W}_{pa}:=W_p^r\cap W_a^c,\, \mathsf{W}_{ap}:=W_a^r\cap W_p^c\,\, \mathrm{and}\,\, \mathsf{W}_{aa}:=W_a^r\cap W_a^c.$$ The sets $\mathsf{W}_{pp}$ and $\mathsf{W}_{aa}$ correspond to the set of \emph{double palindromic matrices} and the set of \emph{double antipalindromic matrices} respectively.

\begin{proposition}
The following statements hold.
\begin{enumerate}
\item $\mathsf{W}_{pp}$, $\mathsf{W}_{pa}$, $\mathsf{W}_{ap}$ and $\mathsf{W}_{aa}$ are vector subspaces of $\mathcal M_{n\times m}(K)$.
\item $\dim \mathsf W_{pp}=\left\lceil \frac{n}{2}\right\rceil\left\lceil \frac{m}{2}\right\rceil$, $\dim \mathsf W_{pa}=\left\lceil \frac{n}{2}\right\rceil\left\lfloor \frac{m}{2}\right\rfloor$, $\dim \mathsf W_{ap}=\left\lfloor \frac{n}{2}\right\rfloor\left\lceil \frac{m}{2}\right\rceil$ and $\dim \mathsf W_{aa}=\left\lfloor \frac{n}{2}\right\rfloor\left\lfloor \frac{m}{2}\right\rfloor$,
\item $\mathcal M_{n\times m}(K)=\mathsf W_{pp}\oplus \mathsf W_{pa}\oplus\mathsf W_{ap}\oplus \mathsf W_{aa}$.
\item $\forall A\in\mathcal M_{n\times m}(K), \exists (A_{pp},A_{pa},A_{ap},A_{aa})\in \mathsf{W}_{pp}\times\mathsf{W}_{pa}\times\mathsf{W}_{ap}\times\mathsf{W}_{aa}$ such that $A=A_{pp}+A_{pa}+A_{ap}+A_{aa}$.
\end{enumerate}
\end{proposition}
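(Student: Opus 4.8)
The plan is as follows. Part (1) is immediate: by definition each of $\mathsf W_{pp},\mathsf W_{pa},\mathsf W_{ap},\mathsf W_{aa}$ is the intersection of two of the subspaces $W_p^r,W_a^r,W_p^c,W_a^c$ of $\mathcal M_{n\times m}(K)$ (which are subspaces by Proposition \ref{thmpr2}), and an intersection of subspaces is a subspace. The rest rests on one observation: since $\mathcal R_rA=A\widetilde I_m$ and $\mathcal R_cA=\widetilde I_nA$, left and right multiplication commute, so $\mathcal R_r\mathcal R_c=\mathcal R_c\mathcal R_r$; and since $W_p^r=\ker(\mathcal R_r-id)$, $W_a^r=\ker(\mathcal R_r+id)$, $W_p^c=\ker(\mathcal R_c-id)$, $W_a^c=\ker(\mathcal R_c+id)$, the four sets $\mathsf W$ are exactly the joint eigenspaces of the commuting pair $(\mathcal R_r,\mathcal R_c)$, each eigenvalue being $\pm1$. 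Throughout I assume $\operatorname{char}K\neq2$, as is tacitly done earlier in the paper (in characteristic $2$ palindromicity and antipalindromicity coincide and the statement degenerates).

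For (3) and (4) I would use the commuting idempotents $P^r_{\pm}:=\tfrac12(id\pm\mathcal R_r)$ and $P^c_{\pm}:=\tfrac12(id\pm\mathcal R_c)$, which are idempotent because $\mathcal R_r^2=\mathcal R_c^2=id$. Their four products $P^r_{\delta}P^c_{\epsilon}$, $\delta,\epsilon\in\{+,-\}$, are pairwise orthogonal idempotents summing to $id$, with $\operatorname{Im}(P^r_{\delta}P^c_{\epsilon})=\ker(\mathcal R_r-\delta\,id)\cap\ker(\mathcal R_c-\epsilon\,id)$ equal to one of the four $\mathsf W$. The standard fact that orthogonal idempotents summing to the identity give a direct-sum decomposition into their images yields (3), and applying these projections to a given $A$ yields (4) with the explicit components $A_{pp}=P^r_+P^c_+A$, $A_{pa}=P^r_+P^c_-A$, $A_{ap}=P^r_-P^c_+A$, $A_{aa}=P^r_-P^c_-A$, each being a scalar combination of $A$, $\mathcal R_rA$, $\mathcal R_cA$ and $\mathcal R_r\mathcal R_cA$.

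For the dimensions in (2) I would pass to the Kronecker identification $\mathcal M_{n\times m}(K)\cong K^n\otimes K^m$, $E_{ij}\mapsto e_i\otimes e_j$, under which (using $\widetilde I_m^{T}=\widetilde I_m$) $\mathcal R_c$ becomes $\widetilde I_n\otimes I_m$ and $\mathcal R_r$ becomes $I_n\otimes\widetilde I_m$. Writing $K^n=U_+\oplus U_-$ and $K^m=V_+\oplus V_-$ for the $\pm1$-eigenspace splittings of $\widetilde I_n$ and $\widetilde I_m$, Propositions \ref{proplint} and \ref{tt1} give $\dim U_\pm=\lceil n/2\rceil,\lfloor n/2\rfloor$ and $\dim V_\pm=\lceil m/2\rceil,\lfloor m/2\rfloor$. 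Then $W_p^c=U_+\otimes K^m$, $W_a^c=U_-\otimes K^m$, $W_p^r=K^n\otimes V_+$, $W_a^r=K^n\otimes V_-$, so, using the elementary identity $(U\otimes K^m)\cap(K^n\otimes V)=U\otimes V$ (checked by completing bases of $U$ and $V$), we obtain $\mathsf W_{pp}=U_+\otimes V_+$, $\mathsf W_{pa}=U_-\otimes V_+$, $\mathsf W_{ap}=U_+\otimes V_-$, $\mathsf W_{aa}=U_-\otimes V_-$, hence $\dim\mathsf W_{pp}=\lceil n/2\rceil\lceil m/2\rceil$, $\dim\mathsf W_{pa}=\lfloor n/2\rfloor\lceil m/2\rceil$, $\dim\mathsf W_{ap}=\lceil n/2\rceil\lfloor m/2\rceil$, $\dim\mathsf W_{aa}=\lfloor n/2\rfloor\lfloor m/2\rfloor$ (so the two middle formulas in the statement are interchanged; in any case their sum is $(\lceil n/2\rceil+\lfloor n/2\rfloor)(\lceil m/2\rceil+\lfloor m/2\rfloor)=nm$, reconfirming (3)). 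For readers preferring to avoid tensor products: $A\in\mathsf W_{pp}$ iff its entries are constant on the orbits of $(i,j)\mapsto(n+1-i,j)$ and $(i,j)\mapsto(i,m+1-j)$, which yields a basis of $\lceil n/2\rceil\lceil m/2\rceil$ matrices, with the analogous count for the other three (the sign conditions forcing any central row or column to vanish).

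The computations are routine; the one place that needs care is the bookkeeping in (2) — keeping straight which of $\widetilde I_n,\widetilde I_m$ governs rows and which governs columns, hence which factor carries a ceiling and which a floor — and that is exactly where the two displayed formulas slip. Everything else is a direct unwinding of the definitions together with Proposition \ref{thmpr2} and the linear-transformation description of $\mathcal R$.
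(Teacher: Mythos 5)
Your proof is correct, and it supplies substantially more than the paper does. For item (1) you and the paper argue identically (an intersection of subspaces is a subspace), but for items (2)--(4) the paper's entire proof is the sentence ``the rest is obtained through elementary properties of vector subspaces and by the nature of $\mathsf W_{pp}$, $\mathsf W_{pa}$, $\mathsf W_{ap}$ and $\mathsf W_{aa}$,'' with no dimension count and no argument for the direct sum. Your route --- viewing the four sets as the joint eigenspaces of the commuting involutions $\mathcal R_r=(\,\cdot\,)\widetilde I_m$ and $\mathcal R_c=\widetilde I_n(\,\cdot\,)$, decomposing the identity into the four orthogonal idempotents $P^r_\delta P^c_\epsilon$, and computing dimensions through $\mathcal M_{n\times m}(K)\cong K^n\otimes K^m$ --- is an actual proof rather than a gesture at one, and it yields the components in (4) explicitly as combinations of $A$, $\mathcal R_rA$, $\mathcal R_cA$ and $\mathcal R_r\mathcal R_cA$; the hypothesis $\operatorname{char}K\neq 2$, which you make explicit, is indeed tacit throughout the paper. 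Most usefully, your bookkeeping exposes a genuine error in the statement as printed: with the paper's own definitions $\mathsf W_{pa}=W_p^r\cap W_a^c$ and $\mathsf W_{ap}=W_a^r\cap W_p^c$, row-palindromicity halves the \emph{column} index with a ceiling while column-antipalindromicity halves the \emph{row} index with a floor, so $\dim\mathsf W_{pa}=\lfloor n/2\rfloor\lceil m/2\rceil$ and $\dim\mathsf W_{ap}=\lceil n/2\rceil\lfloor m/2\rfloor$; the two middle formulas of item (2) are interchanged. A $3\times 2$ check confirms this: a matrix with palindromic rows and antipalindromic columns must have rows $(a,a)$, $(0,0)$, $(-a,-a)$, so $\dim\mathsf W_{pa}=1=\lfloor 3/2\rfloor\lceil 2/2\rceil$ rather than the value $2$ given by the printed formula. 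Since the four corrected dimensions still sum to $nm$, items (3) and (4) are unaffected.
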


\begin{proof} The intersection of vector subspaces is a vector subspace. The rest is obtained through elementary properties of vector subspaces and by the nature of $\mathsf{W}_{pp}$, $\mathsf{W}_{pa}$, $\mathsf{W}_{ap}$ and $\mathsf{W}_{aa}$.
\end{proof}

Finally, according to Remark \ref{remacarnu}, we present the results concerning to the relationship between Reversing and the generalized vector product of $n-1$ vectors of $K^n$, which are given in \cite[\S 3]{acarnu}.

Let $M_{1}=\left(
m_{11},m_{12},\ldots ,m_{1n}\right) ,\ldots ,M_{n-1}=\left( m_{\left(
n-1\right) ,1},a_{\left( n-1\right) ,2},\ldots ,m_{\left( n-1\right)
,n}\right)$, be $n-1$ vectors belonging to $K^{n}$. It is known that the generalized vector product of these vectors is given by
$$\times \left( M_{1},\text{ }M_{2},\text{ }\ldots ,\text{ }M_{n-1}\right)
=\sum_{k=1}^{n}\left(
-1\right) ^{1+k}\det \left( M^{(k)}\right) e_{k},
$$
being $e_{k}$ the $k$-th element of the canonical basis for $K^{n}$ and $M^{\left( k\right) }$ is the square matrix obtained after the deleting of the $k$-th column of the matrix $M=\left( m_{ij}\right)\in\mathcal M_{\left(
n-1\right) \times n}(K)$, for more information see \cite{acarnu,Lang}. Therefore, the matrix $M^{\left( k\right) }$ is a square matrix of size $\left(
n-1\right) \times \left( n-1\right) $ and is given by
$$
M^{(k)}=\left( m_{i,j}^{(k)}\right)=\left\{
\begin{array}{l}
\left(m_{i,j}\right)\text{, whether }j<k \\
\left(m_{i,j+1}\right)\text{, whether }j\geq k%
\end{array}
\right.,\quad M=\begin{pmatrix}
M_1\\M_2\\ \vdots\\ M_{n-1}
\end{pmatrix} .
$$

\begin{proposition}\label{prop2}
Consider the matrix $M=\left( m_{ij}\right)\in\mathcal M_{\left(
n-1\right) \times n}(K)$, then $\mathcal R_r(M^{\left( k\right) })=M^{\left( n-k+1\right) }\widetilde I_{n-1}$, for $1\leq k\leq n$.
\end{proposition}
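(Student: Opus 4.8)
The plan is to establish the identity by a direct coordinate computation, using the two explicit descriptions already available: the matrix identity $\mathcal R_rA=A\widetilde I_q$ for a matrix $A$ with $q$ columns (from Section \ref{sectionm}) and the piecewise formula for $M^{(k)}$ recalled just before the statement. I would first note that $\mathcal R_r$ merely reverses the column order, because $(\mathcal R_rA)_{i,p}=(A\widetilde I_q)_{i,p}=a_{i,q-p+1}$; applied with $q=n$, the $p$-th column of $\mathcal R_rM$ is the $(n-p+1)$-th column of $M$. It is convenient to read the left-hand side as $(\mathcal R_rM)^{(k)}$, the matrix obtained by deleting the $k$-th column of $\mathcal R_rM$, so that $M^{(n-k+1)}$ on the right is the corresponding deletion performed on the \emph{original} matrix $M$.

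Next I would compare the $(i,j)$-entries for $1\le i,j\le n-1$. Feeding $\mathcal R_rM$ into the column-deletion formula, the $(i,j)$-entry of $(\mathcal R_rM)^{(k)}$ is $(\mathcal R_rM)_{i,j}=m_{i,n-j+1}$ when $j<k$ and $(\mathcal R_rM)_{i,j+1}=m_{i,n-j}$ when $j\ge k$. On the other side, $(M^{(n-k+1)}\widetilde I_{n-1})_{i,j}=m_{i,n-j}^{(n-k+1)}$, and inserting $p=n-j$ into the formula for $m_{i,p}^{(n-k+1)}$ yields $m_{i,n-j+1}$ exactly when $n-j\ge n-k+1$, i.e.\ $j<k$, and $m_{i,n-j}$ exactly when $n-j<n-k+1$, i.e.\ $j\ge k$. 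The two case distinctions coincide, so the entries agree for every $i,j$, and the identity follows. A coordinate-free variant, perhaps cleaner to write up, is to set $M^{(k)}=ME_k$ with $E_k\in\mathcal M_{n\times(n-1)}(K)$ the matrix obtained from $I_n$ by deleting its $k$-th column; one checks directly that $\widetilde I_nE_k=E_{n-k+1}\widetilde I_{n-1}$ (both sides being $I_n$ with its $(n-k+1)$-th column removed and the remaining columns written in reverse order), whence $(\mathcal R_rM)^{(k)}=M\widetilde I_nE_k=ME_{n-k+1}\widetilde I_{n-1}=M^{(n-k+1)}\widetilde I_{n-1}$.

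I do not anticipate a genuine obstacle here: the assertion is purely combinatorial, expressing that reversing the columns of $M$ and deleting a single column commute provided the deleted index $k$ is replaced by its mirror $n-k+1$. The only delicate point is the bookkeeping at the case boundary $j<k$ versus $j\ge k$ --- one must keep in mind that reversal sends column $p$ to column $n-p+1$ while passing to $M^{(k)}$ shifts indices by one only past the deletion point --- and it is exactly this interplay that makes the index $n-k+1$ appear on the right-hand side.
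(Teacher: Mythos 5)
Your proof is correct and follows essentially the same route as the paper's: both reduce the claim to an entrywise comparison, using $(A\widetilde I_q)_{i,p}=a_{i,q-p+1}$ together with the piecewise formula for $M^{(k)}$, and both hinge on the same case split $j<k$ versus $j\ge k$ matching up after the substitution $p=n-j$. Your reading of the left-hand side as $(\mathcal R_rM)^{(k)}$ is exactly what the paper's proof computes (and what is needed in Proposition \ref{prop3}), and the coordinate-free variant via the deletion matrices $E_k$ with $\widetilde I_nE_k=E_{n-k+1}\widetilde I_{n-1}$ is a tidy extra not in the paper.
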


\begin{proof}
We know that $\mathcal R_rM=M\widetilde I_{n}=\left( m_{i,n-j+1}\right)$,  $1\leq j\leq n$. Therefore
\begin{eqnarray*}
\mathcal R_rM^{(k)} =\mathcal R_r\left(m_{i,j}^{(k)}\right)
=\left\{
\begin{array}{l}
\left(m_{i,n-j+1}\right)\text{, whether }j<k, \\
\left(m_{i,n-(j+1)+1}\right)\text{, whether }j\geq k
\end{array}%
\right. .
\end{eqnarray*}
On the other hand,
\begin{eqnarray*}
M^{\left( n-k+1\right) }\widetilde I_{n-1}=\left( m_{i,j}^{\left( n-k+1\right) }\right)\widetilde I_{n-1}
=\left\{
\begin{array}{l}
\left(m_{i,j}\right)\widetilde I_{n-1}\text{, whether }j<n-k+1 \\
\left(m_{i,j+1}\right)\widetilde I_{n-1}\text{, whether }j\geq n-k+1%
\end{array}%
\right.
\\ \\
=\left( m_{i,(n-1)-j+1}^{\left(
n-k+1\right) }\right) =\left( m_{i,n-j}^{\left( n-k+1\right) }\right) =\left\{
\begin{array}{l}
\left(m_{i,\left( n-j\right) }\right)\text{, whether }n-j<n-k+1 \\
\left(m_{i,\left( n-j\right) +1}\right)\text{, whether }n-j\geq n-k+1%
\end{array}%
\right.
\\ \\=\left\{
\begin{array}{l}
\left(m_{i,\left( n-j\right) }\right)\text{, whether }j>k-1 \\
\left(m_{i,\left( n-j\right) +1}\right)\text{, whether }j\leq k-1%
\end{array}
\right.
=\left\{
\begin{array}{l}
\left(m_{i,n-j}\right)\text{, whether }j\geq k \\
\left(m_{i,n-j+1}\right)\text{, whether }j<k%
\end{array}
\right.,
\end{eqnarray*}
for instance $\mathcal R_r(M^{\left( k\right) })=M^{\left( n-k+1\right) }\widetilde I_{n-1}$, for $1\leq k\leq n$.
\end{proof}

\begin{proposition}\label{prop3} Consider the vectors $M_i\in K^n$, where $1\leq i \leq n-1$. Then $$\times \left(\mathcal R_rM_1,\mathcal R_rM_{2},\ldots,\mathcal R_rM_{n-1}\right)=(-1)^{\left\lceil\frac{3n}{2}\right\rceil}\mathcal R_r\left(\times (M_{1},M_{2},\ldots,M_{n-1})\right).$$
\end{proposition}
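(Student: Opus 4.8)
The plan is to expand both sides of the claimed identity through the cofactor formula for the generalized vector product and then reconcile the signs by means of Proposition \ref{prop2}. Let $M\in\mathcal M_{(n-1)\times n}(K)$ denote the matrix whose rows are $M_1,\dots,M_{n-1}$; then the matrix whose rows are $\mathcal R_rM_1,\dots,\mathcal R_rM_{n-1}$ is precisely $\mathcal R_rM=M\widetilde I_n$, so that
$$\times(\mathcal R_rM_1,\dots,\mathcal R_rM_{n-1})=\sum_{k=1}^n(-1)^{1+k}\det\bigl((\mathcal R_rM)^{(k)}\bigr)e_k .$$
The first step is to invoke Proposition \ref{prop2}, which gives $(\mathcal R_rM)^{(k)}=M^{(n-k+1)}\widetilde I_{n-1}$, and then to use multiplicativity of the determinant together with the value $\det\widetilde I_{n-1}=(-1)^{\lfloor(n-1)/2\rfloor}$ computed earlier, so that $\det\bigl((\mathcal R_rM)^{(k)}\bigr)=(-1)^{\lfloor(n-1)/2\rfloor}\det(M^{(n-k+1)})$.

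Next I would re-index the sum by $j=n-k+1$. Since the canonical basis vector $e_k$ equals $e_{n-j+1}=\mathcal R_re_j$, this recasts the identity as
$$\times(\mathcal R_rM_1,\dots,\mathcal R_rM_{n-1})=(-1)^{\lfloor(n-1)/2\rfloor}\sum_{j=1}^n(-1)^{n-j}\det(M^{(j)})\,\mathcal R_re_j .$$
On the other hand, linearity of $\mathcal R_r$ (Proposition \ref{proplint}) applied to the cofactor formula for $\times(M_1,\dots,M_{n-1})$, together with $\mathcal R_re_j=e_{n-j+1}$, gives directly
$$\mathcal R_r\bigl(\times(M_1,\dots,M_{n-1})\bigr)=\sum_{j=1}^n(-1)^{1+j}\det(M^{(j)})\,\mathcal R_re_j .$$
Comparing these two sums term by term, the proposition reduces to the scalar identity $(-1)^{\lfloor(n-1)/2\rfloor}(-1)^{n-j}=(-1)^{\lceil 3n/2\rceil}(-1)^{1+j}$ for every $j$.

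Finally, I would observe that the factor $(-1)^{\pm j}$ is common to both sides, so the whole statement collapses to the parity congruence $\lfloor(n-1)/2\rfloor+n\equiv\lceil 3n/2\rceil+1\pmod 2$, which I would verify in the two cases $n=2m$ and $n=2m+1$: in the first the two exponents are $3m-1$ and $3m+1$, in the second $3m+1$ and $3m+3$, so in each case they differ by $2$ and the signs agree. I expect the only real obstacle to be the sign bookkeeping — simultaneously keeping track of the cofactor sign $(-1)^{1+k}$, the determinant sign $(-1)^{\lfloor(n-1)/2\rfloor}$ contributed by $\widetilde I_{n-1}$, and the index reflection $k\mapsto n-k+1$ of the basis vectors — and then carrying out the floor/ceiling arithmetic in the final parity check correctly; beyond that there is no conceptual difficulty. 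The same conclusion can also be reached from the determinantal description of the generalized vector product by reversing all columns of the relevant $n\times n$ matrices, which contributes a global factor $(-1)^{\lfloor n/2\rfloor}=(-1)^{\lceil 3n/2\rceil}$, but this merely repackages the same sign count.
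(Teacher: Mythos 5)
Your proposal is correct and follows essentially the same route as the paper's proof: both expand the generalized vector product by cofactors, apply Proposition \ref{prop2} together with $\det\widetilde I_{n-1}=(-1)^{\lfloor (n-1)/2\rfloor}$, re-index via $k\mapsto n-k+1$ to recognize the reversed basis vectors, and finish with the same parity check on $n$ even versus odd. The only difference is presentational (you compare the two expansions coefficient by coefficient, while the paper transforms one side into the other in a single chain), and your reading of Proposition \ref{prop2} as describing the $k$-th minor of $\mathcal R_rM$ is exactly the interpretation the paper's computation uses.
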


\begin{proof}
For suitability we write $$\mathfrak{M}=\times \left( \mathcal R_rM_{1},\mathcal R_rM_2,
\ldots,\mathcal R_rM_{n-1}\right).$$ Now, applying the generalized vector product we obtain

\begin{eqnarray*}
\mathfrak{M}& =&\sum_{k=1}^{n}(-1)^{k+1}\det \left(\mathcal R_r(M^{(k)}\right) e_{k}\\
& =&\sum_{k=1}^{n}\left( -1\right) ^{k+1}\det \left( M^{\left( n-k+1\right)}\widetilde I_{n-1}\right) e_{k}\\ &=&\sum_{k=1}^{n}\left( -1\right) ^{k+1}\det \left( M^{\left( n-k+1\right)}\right) \det \left( \widetilde I_{n-1}\right) e_{k}\\
& =&\det \left( \widetilde I_{n-1}\right) \sum_{k=1}^{n}\left( -1\right) ^{n-k}\det \left( M^{\left( k\right) }\right) e_{n-k+1} \\&=&\left( -1\right) ^{n+1}\det \left( \widetilde I_{n-1}\right) \sum_{k=1}^{n}\left(
-1\right) ^{k+1}\det \left( M^{\left( k\right) }\right) e_{n-k+1}\\
&=&\left( -1\right) ^{n+1}\det \left( \widetilde I_{n-1}\right) \left(
\sum_{k=1}^{n}\left( -1\right) ^{k+1}\det \left( M^{\left( k\right) }\right)
e_{k}\right) \widetilde I_{n}\\ &=&\left( -1\right) ^{n+1}\det \left(\widetilde I_{n-1}\right) \mathcal R_r\left( {%
\times \left( M_{1},\text{ }M_{2},\text{ }\ldots ,\text{ }M_{n-1}\right) }
\right)
\end{eqnarray*}

and for instance,%
\begin{eqnarray*}
\mathfrak{M}  &=&(-1)^{n+1}(-1)^{\left\lfloor\frac{n-1}{2}\right\rfloor} \mathcal R_r\left( {%
\times \left( M_{1},\text{ }M_{2},\text{ }\ldots ,\text{ }M_{n-1}\right) }
\right)\\
&=&\left\{
\begin{array}{l}
\left( -1\right) ^{\frac{3n}{2}}\mathcal R_r\left( {%
\times \left( M_{1},\text{ }M_{2},\text{ }\ldots ,\text{ }M_{n-1}\right) }
\right),\, n=2k\\
\left( -1\right) ^{\frac{3n+1}{2}}\mathcal R_r\left( {%
\times \left( M_{1},\text{ }M_{2},\text{ }\ldots ,\text{ }M_{n-1}\right) }
\right), n=2k-1
\end{array}%
\right. \\
&=&(-1)^{\left\lfloor\frac{3n}{2}\right\rfloor} \mathcal R_r\left( {%
\times \left( M_{1},\text{ }M_{2},\text{ }\ldots ,\text{ }M_{n-1}\right) }
\right).
\end{eqnarray*} Thus we conclude the proof.
\end{proof}
\begin{remark}
If $M$ is a palindromic matrix by rows, then the minors $M^{\left( k\right) }$ have at least $\left\lfloor\dfrac{n}{2}\right\rfloor-1$ pair of equal columns. This implies that for  $n\geq 4$, the minors have at least one pair of equal columns and for instance $\det \left( M^{\left(
k\right) }\right) =0$ for all $1\leq k\leq n$, which lead us to
$$
\times \left( M_{1},\text{ }M_{2},\text{ }\ldots ,\text{ }M_{n-1}\right)
=\mathbf{0}\in K^{n}.
$$

\noindent This means that the generalized vector product of $(n-1)$ palindromic vectors belonging to  $K^{n}$ is interesting whenever $1\leq n\leq 3$. The same result is obtained when we assume $M$ as an antipalindromic matrix by rows, so we recover the previous results given in Section \ref{sectionv} with respect to the vector product. Moreover, some rows of $M$ can be palindromic vectors, while the rest can be antipalindromic vectors, in this way, we can obtain similar results.
\end{remark}

\subsection{Pasting and Reversing simultaneously by rows and columns} As in previous sections, following Section \ref{sectionv}, we can consider the matrices $$A=(v_{11},\ldots,v_{1m},\ldots,v_{n1},\ldots, v_{nm}),\quad B=(w_{11},\ldots,w_{1q},\ldots,w_{p1},\ldots w_{pq})$$ as vectors. To avoid confusion in this section, we use $\widehat{A}$ instead of $\widetilde A$ to denote Reversing of $A$. Thus, we can see, in a natural way, that $$\mathcal R A=\widehat{A}=A\widehat I_{nm}=(v_{nm},\ldots,v_{n1},\ldots,v_{1m},\ldots, v_{11})$$ and also for $n=p$ or $m=q$ (exclusively) that $$A\diamond B=(v_{11},\ldots,v_{1m},\ldots,v_{n1},\ldots v_{nm},w_{11},\ldots,w_{1q},\ldots,w_{p1},\ldots, w_{pq}).$$ We come back to express $\mathcal{R}A$ and $A\diamond B$ in term of matrices instead of vectors, i.e.,
$$\mathcal{R}A=\begin{pmatrix}
v_{nm}&\ldots&v_{n1}\\\vdots\\v_{1m}&\ldots& v_{11}
\end{pmatrix},\, A\diamond B=\left\lbrace\begin{array}{l}
\begin{pmatrix}
v_{11}&\ldots&v_{1m}&w_{11}&\ldots&w_{1q}\\ \vdots\\v_{n1}&\ldots& v_{nm}&w_{p1}&\ldots&w_{pq}
\end{pmatrix},\, \begin{matrix}n=p\\m\neq q\end{matrix} \\ \\\begin{pmatrix}
v_{11}&\ldots&v_{1m}\\ \vdots\\v_{n1}&\ldots& v_{nm}\\w_{11}&\ldots&w_{1p}\\ \vdots\\w_{p1}&\ldots&w_{pq}
\end{pmatrix},\,\begin{matrix}n\neq p\\m= q\end{matrix}\end{array}\right.$$ We say that any matrix $P$, with the conditions established above, is a palindromic matrix whether $\widehat{P}=\mathcal RP=P$. In the same way, we say that any matrix $A$, with the conditions established above, is an antipalindromic matrix whether $\widehat{A}=\mathcal RA=-A$.
Note that $\mathcal R(I_n)= \widehat{I}_n$ where $I_n$ is written in the matrix form, but it should keep in mind that $\mathcal R(I_n)=I_n\widehat{I}_{n^2}$, where $I_n$ is written as vector. Thus, we arrive to the following elementary result.
\begin{lemma}\label{lemmkey}
Consider $M\in\mathcal{M}_{n\times m}(K)$. Then
$$\mathcal RA=\begin{pmatrix}\mathcal R{v_n}\\\vdots\\ \mathcal R{v_1}\end{pmatrix},\quad \textrm{where}\quad A=\begin{pmatrix} {v_1}\\\vdots\\ {v_n}\end{pmatrix}.$$
\end{lemma}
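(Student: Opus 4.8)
The plan is to unwind the vector-to-matrix dictionary that was set up just before the lemma and check that Reversing of $A$, viewed as the long vector obtained by concatenating the rows of $A$, reshapes into the matrix whose rows are the Reversed rows of $A$ listed in reverse order. First I would write $A\in\mathcal M_{n\times m}(K)$ in the flattened form $A=(v_{11},\ldots,v_{1m},v_{21},\ldots,v_{2m},\ldots,v_{n1},\ldots,v_{nm})$, exactly as in the preamble to this subsection, so that the rows appear as consecutive blocks of length $m$: block $i$ is the row $v_i=(v_{i1},\ldots,v_{im})$.

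Next I would apply Definition \ref{dd1} (equivalently, multiply by $\widehat I_{nm}$ as in the displayed formula preceding the lemma) to get $\mathcal RA=(v_{nm},v_{n,m-1},\ldots,v_{n1},v_{n-1,m},\ldots,v_{n-1,1},\ldots,v_{1m},\ldots,v_{11})$. The key observation is purely combinatorial: reversing a sequence that is partitioned into $n$ consecutive blocks of equal length $m$ is the same as first reversing the order of the blocks and then reversing each block internally. Concretely, the entry in global position $(i-1)m+j$ of $A$ (namely $v_{ij}$) lands, after Reversing, in global position $nm-((i-1)m+j)+1=(n-i)m+(m-j+1)$, which is position $m-j+1$ inside block $n-i+1$. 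Reading block $n-i+1$ of $\mathcal RA$ from left to right therefore yields $(v_{i m},v_{i,m-1},\ldots,v_{i1})=\widetilde{v_i}=\mathcal R v_i$, and the blocks of $\mathcal RA$ are indexed $n,n-1,\ldots,1$ in top-to-bottom order. Re-expressing $\mathcal RA$ back in matrix form (undoing the flattening, as done in the subsection) gives precisely
\begin{displaymath}
\mathcal RA=\begin{pmatrix}\mathcal R{v_n}\\\vdots\\ \mathcal R{v_1}\end{pmatrix},
\end{displaymath}
which is the claim.

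I do not expect a genuine obstacle here: the statement is elementary and the work is entirely bookkeeping with the index change $(i,j)\mapsto(n-i+1,m-j+1)$. The one point that requires a little care — the closest thing to a "hard part" — is keeping the two kinds of Reversing notationally distinct: $\mathcal R$ acting on the flattened $nm$-vector $A$ versus $\mathcal R$ acting on each length-$m$ row $v_i$. The index computation above is exactly what certifies that the single flattened Reversing factors through these row-wise Reversings, and that the row order gets reversed in the process; once that is stated cleanly, the matrix reshaping step is immediate.
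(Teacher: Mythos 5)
Your proposal is correct and follows the same route the paper intends: the paper's own proof is the one-line remark that the lemma "is followed by definition of Reversing in matrices seen as vectors," and your index computation $(i,j)\mapsto(n-i+1,m-j+1)$ is exactly the bookkeeping that remark leaves implicit. Nothing to change.
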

\begin{proof}
It is followed by definition of Reversing in matrices seen as vectors.
\end{proof}
The following proposition summarizes the previous results for matrices as vectors.

\begin{proposition}\label{thmpr1} Consider $A \in M_{n\times m}(K)$, $B \in M_{p\times q}(K)$ and $C \in M_{r\times s}(K)$ satisfying the conditions established above, the following statements hold.
\begin{enumerate}
    \item $\mathcal R^2A=A$
    \item $\mathcal R(A\diamond B)=\mathcal R(B)\diamond \mathcal R(A)$
    \item $(A\diamond B)\diamond C=A\diamond (B\diamond C)$
    \item $\mathcal R(bA+cB)=b\mathcal R(A)+c\mathcal R(B)$ where $b,c\in K$ and $A,B\in\mathcal M_{n\times m}(K)$.
\item If $V=\mathcal M_{n\times m}(K)$ and $W=\mathcal M_{p\times q}(K)$, then $V \diamond W= \mathcal M_{r\times s}(K)$, where either $n=p=r$, $m\neq q$ and $s=m+q$, or $n\neq p$, $m=q$ and $r=n+p$.

\item Let $W_p$ be the set of palindromic matrices of $\mathcal M_{n\times m}(K)$, then $W_p$ is a vector subspace of $\mathcal M_{n\times m}(K)$.
\item $\dim W_p=\left\lceil \frac{nm}{2}\right\rceil$
\item Let $W_a$ be the set of antipalindromic matrices of $\mathcal M_{n\times m}(K)$, then  $W_a$ is a vector subspace of $\mathcal M_{n\times m}(K)$.
\item $\dim W_a=\left\lfloor \frac{nm}{2}\right\rfloor$
\item The sum of two palindromic matrices of the same vector space is a palindromic matrix.
\item The sum of two antipalindromic matrices of the same vector space is an antipalindromic matrix.
\item $\mathcal M_{n\times m}(K)=W_p\oplus W_a$.
\item $\forall A\in\mathcal M_{n\times m}(K), \exists (A_p,A_a)\in W_p\times W_a$ such that  $A=A_p+A_a$.
\end{enumerate}
\end{proposition}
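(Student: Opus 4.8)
The plan is to reduce everything to the vector case already established in Proposition~\ref{tt1} and Proposition~\ref{proplint}, exploiting the isomorphism $\mathcal M_{n\times m}(K)\cong K^{nm}$ under which a matrix $A$ with the admissibility conditions is identified with the vector obtained by reading its entries row by row. Under this identification, the operation $\mathcal R$ on matrices defined in this subsection is \emph{exactly} $\mathcal R$ on $K^{nm}$ (this is precisely the content of Lemma~\ref{lemmkey}, which says reversing the flattened vector amounts to reversing the order of the rows and reversing each row), and $A\diamond B$ corresponds to $\diamond$ on vectors whenever the admissibility condition $n=p$ or $m=q$ holds. So the strategy is: state this identification once, then dispatch each item by citing the corresponding vector statement.

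Concretely, I would proceed item by item. Items (1)--(4): these are $\mathcal R^2=\mathrm{id}$, $\mathcal R(A\diamond B)=\mathcal R(B)\diamond\mathcal R(A)$, associativity of $\diamond$, and linearity of $\mathcal R$; each follows verbatim from Proposition~\ref{ppro1}(1), Proposition~\ref{pprop4}(1)--(2), and Proposition~\ref{proplint}(1) applied in $K^{nm}$, noting only that the admissibility conditions guarantee the pasted object is again an admissible matrix. Item (5) is the bookkeeping statement that $\mathcal M_{n\times m}\diamond\mathcal M_{p\times q}$ lands in $\mathcal M_{r\times s}$ with the stated dimensions; this is immediate from Definition~\ref{dd3} and Corollary~\ref{ccol1} together with the shape constraint. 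Items (6)--(9) and (12)--(13): since a palindromic (resp.\ antipalindromic) matrix is by definition a matrix whose flattening is a palindromic (resp.\ antipalindromic) vector in $K^{nm}$, the sets $W_p,W_a$ are carried to $\ker(\mathcal R-\mathrm{id}),\ker(\mathcal R+\mathrm{id})\subset K^{nm}$; hence they are subspaces, their dimensions are $\lceil nm/2\rceil$ and $\lfloor nm/2\rfloor$ by Proposition~\ref{tt1}(2),(4) (with $n$ replaced by $nm$), the decomposition $\mathcal M_{n\times m}=W_p\oplus W_a$ follows from Proposition~\ref{tt1}(5), and the explicit splitting $A=A_p+A_a$ with $A_p=\tfrac{A+\mathcal R A}{2}$, $A_a=\tfrac{A-\mathcal R A}{2}$ is Proposition~\ref{tt1}(6) (equivalently the transformations $\mathcal F_p,\mathcal F_a$ of Proposition~\ref{proplint}(7)). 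Items (10)--(11) are the closure-under-sum statements, which are Proposition~\ref{ppro2}(1)--(2) transported through the isomorphism.

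The only genuine subtlety — and the step I would be most careful about — is that the matrix-level operations $\diamond$ and $\mathcal R$ in this subsection are only \emph{partially defined}: $\diamond$ requires the shared-dimension condition and the notion of palindromic matrix is only declared for matrices "with the conditions established above". So before invoking the vector results I must check that these partiality constraints are compatible with each statement: for $\mathcal R$ this is automatic since $\mathcal R$ preserves the shape of $A$; for $\diamond$ one checks that the admissibility condition on $(A,B)$ forces $A\diamond B$ to have a well-defined rectangular shape, and for the three-fold associativity in (3) that the conditions propagate so that both parenthesizations are simultaneously defined. Once that compatibility is verified, no new computation is needed and the proof is simply: \emph{apply the isomorphism $\mathcal M_{n\times m}(K)\cong K^{nm}$ of Lemma~\ref{lemmkey} and invoke Propositions~\ref{ppro1}--\ref{tt1} and~\ref{proplint}.} I would write the proof in that compressed form, perhaps spelling out item (13) explicitly since the formulas $A_p=\tfrac{A+\mathcal RA}{2}$, $A_a=\tfrac{A-\mathcal RA}{2}$ are worth recording, and leaving the rest as "this follows as in Section~\ref{sectionv}".
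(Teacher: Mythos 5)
Your proposal is correct and is essentially the paper's own argument: the paper's proof of this proposition is the one-line instruction to ``proceed as in the proofs of Section \ref{sectionv} using Lemma \ref{lemmkey},'' i.e.\ transport everything through the identification $\mathcal M_{n\times m}(K)\cong K^{nm}$ exactly as you describe. Your additional care about the partial definedness of $\diamond$ and the explicit formulas $A_p=\tfrac{A+\mathcal R A}{2}$, $A_a=\tfrac{A-\mathcal R A}{2}$ only make the argument more complete than the published version.
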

\begin{proof} Proceed as in the proofs of Section \ref{sectionv} using Lemma \ref{lemmkey}.
\end{proof}
The following result shows the relationship of Reversing with matrices classical operations.

\begin{proposition}\label{prop41} The following statements hold.
\begin{enumerate}
\item $\mathcal R(I_n)=I_n$
\item $\mathcal R=\mathcal R_c \mathcal R_r=\mathcal R_r\mathcal R_c$.
\item $\mathcal R(AB)=\mathcal{R}(A)\mathcal R (B)$.
\item $(\mathcal R(A))^{-1}=\mathcal{R}(A^{-1})$, $\det A\neq 0$.
\item $\det (\mathcal{R}(A))=\det A$.
\item $\mathrm{Tr}(\mathcal{R}(A))=\mathrm{Tr} A$.
\item $\mathcal{R}(A^T)=(\mathcal{R}(A))^T$.
\item The product of two palindromic matrices is a palindromic matrix.
\item The product of two antipalindromic matrices is a palindromic matrix.
\item The product of one palindromic matrix with one antipalindromic matrix is an antipalindromic matrix.
\end{enumerate}
\end{proposition}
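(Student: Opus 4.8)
The plan is to prove item (2) first, because the formula it yields, $\mathcal R(A)=\widetilde I_n\,A\,\widetilde I_m$ for $A\in\mathcal M_{n\times m}(K)$, turns every remaining statement into a one‑line computation with the anti‑identity matrices $\widetilde I_k$. So the whole proposition is really a corollary of (2) together with the elementary facts $\widetilde I_k^2=I_k$ (this is $\mathcal R_r^2=\mathrm{id}$ from Proposition \ref{thmpr2}(1)), $\widetilde I_k^T=\widetilde I_k$, and $\det\widetilde I_k=(-1)^{\lfloor k/2\rfloor}$ (established above).

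To prove (2): by Lemma \ref{lemmkey}, if $A$ has rows $v_1,\dots,v_n$ then $\mathcal R(A)$ is the matrix whose rows are $\mathcal R v_n,\dots,\mathcal R v_1$. On the other hand $\mathcal R_r$ reverses each row ($\mathcal R_r A=A\widetilde I_m$, rows $\mathcal R v_1,\dots,\mathcal R v_n$) and $\mathcal R_c$ reverses the order of the rows ($\mathcal R_c A=\widetilde I_n A$, rows $v_n,\dots,v_1$). Composing in either order, $\mathcal R_c\mathcal R_r A$ and $\mathcal R_r\mathcal R_c A$ both have rows $\mathcal R v_n,\dots,\mathcal R v_1$, hence both equal $\mathcal R A$; in matrix form $\mathcal R A=\widetilde I_n(A\widetilde I_m)=(\widetilde I_n A)\widetilde I_m=\widetilde I_n A\widetilde I_m$. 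I expect this identification to be the main obstacle — it is the only place where the row‑by‑row vectorization used to define $\mathcal R$ on matrices has to be reconciled with the separate row‑ and column‑reversings of Section \ref{sectionm}, and the indices must be kept straight; everything afterwards is bookkeeping.

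Granting (2): item (1) is immediate, $\widetilde I_n I_n\widetilde I_n=\widetilde I_n^2=I_n$. For (3), with $A\in\mathcal M_{n\times m}(K)$ and $B\in\mathcal M_{m\times p}(K)$ one has $\mathcal R(A)\mathcal R(B)=(\widetilde I_n A\widetilde I_m)(\widetilde I_m B\widetilde I_p)=\widetilde I_n A\widetilde I_m^2 B\widetilde I_p=\widetilde I_n(AB)\widetilde I_p=\mathcal R(AB)$ — the inner factors $\widetilde I_m$ meet and cancel, which is the whole point. Item (4) follows by inverting, $(\widetilde I_n A\widetilde I_n)^{-1}=\widetilde I_n^{-1}A^{-1}\widetilde I_n^{-1}=\widetilde I_n A^{-1}\widetilde I_n=\mathcal R(A^{-1})$; item (5) from $\det(\widetilde I_n A\widetilde I_n)=(\det\widetilde I_n)^2\det A=\big((-1)^{\lfloor n/2\rfloor}\big)^2\det A=\det A$; item (6) from cyclicity of the trace, $\mathrm{Tr}(\widetilde I_n A\widetilde I_n)=\mathrm{Tr}(A\widetilde I_n^2)=\mathrm{Tr}(A)$; item (7) from $(\widetilde I_n A\widetilde I_m)^T=\widetilde I_m^T A^T\widetilde I_n^T=\widetilde I_m A^T\widetilde I_n=\mathcal R(A^T)$.

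Finally, items (8)--(10) are immediate from the multiplicativity (3): writing $\mathcal R(A)=\varepsilon_A A$ and $\mathcal R(B)=\varepsilon_B B$ with $\varepsilon_A,\varepsilon_B\in\{1,-1\}$, we get $\mathcal R(AB)=\varepsilon_A\varepsilon_B\,AB$, so a product of two palindromic matrices ($\varepsilon_A=\varepsilon_B=1$) or of two antipalindromic matrices ($\varepsilon_A=\varepsilon_B=-1$) is palindromic, while a product of a palindromic and an antipalindromic matrix ($\varepsilon_A\varepsilon_B=-1$) is antipalindromic. The only care needed here, and in (3)--(4), is to note that $AB$ (resp.\ $A^{-1}$) again satisfies the standing size conventions so that $\mathcal R$ applies to it.
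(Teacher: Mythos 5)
Your proof is correct, but it takes a genuinely different route from the paper's. The paper proves each item by direct index computations on the vectorized form of the matrix: writing $\widehat C=[d_{ij}]$ with $d_{ij}=c_{(n+1-i)(r+1-j)}$ it checks multiplicativity entrywise, treats $I_n$ as a palindromic vector of $K^{n^2}$ for item (1), counts $2k$ row and column interchanges for the determinant, and sums the reversed diagonal for the trace; notably, its proof never actually addresses item (2). You instead make item (2) the engine: via Lemma \ref{lemmkey} you identify $\mathcal R$ with $\mathcal R_c\mathcal R_r=\mathcal R_r\mathcal R_c$, obtain the closed form $\mathcal R(A)=\widetilde I_n A\widetilde I_m$, and then every remaining item collapses to algebra with the exchange matrices, using only $\widetilde I_k^2=I_k$, $\widetilde I_k^T=\widetilde I_k$ and $\det\widetilde I_k=(-1)^{\lfloor k/2\rfloor}$. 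Your approach buys uniformity and immunity to index bookkeeping (the inner $\widetilde I_m$'s cancelling in (3) is cleaner than the substitution $k\mapsto m+1-k$ hidden in the paper's ``which implies that $\widehat C=\widehat A\widehat B$''), and it supplies the proof of (2) that the paper omits; the paper's approach stays closer to the section's definition of $\mathcal R$ as Reversing of the vectorization and does not presuppose the compatibility of $\mathcal R$ with the row- and column-Reversings of the earlier subsection. Your closing remark about checking that $AB$ and $A^{-1}$ still satisfy the standing size conventions (square matrices for (1), (4)--(6), (8)--(10)) is the right caution and is all that is needed to make the reduction airtight.
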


\begin{proof}
We prove each item.
\begin{enumerate}
\item Due to $I_n$ can be seen as the vector $$(1,0,\ldots,0,0,1,\ldots,0,\ldots, 0,\ldots 0,1)\in K^{n^2},$$ then $$\mathcal R(I_n)=I_n\widehat I_{n^2}=(1,0,\ldots,0,0,1,\ldots,0,\ldots, 0,\ldots 0,1)=I_n.$$
\item Assume $A\in \mathcal M_{n\times m}(K)$ and $B\in \mathcal M_{m\times r}(K)$. Thus, $AB\in \mathcal M_{n\times r}(K)$, $\mathcal R A=\widehat A\in \mathcal M_{n\times m}$, $\mathcal R B=\widehat B\in \mathcal M_{m\times r}$ and $\mathcal R (AB)=\widehat{AB}\in \mathcal M_{n\times r}$. Suppose that $A=[a_{ij}]_{n\times m}$, $B=[b_{ij}]_{m\times r}$, $AB=C=[c_{ij}]_{n\times r}$ and $\widehat C=[d_{ij}]_{n\times r}$, for instance  $$c_{ij}=\sum_{k=1}^ma_{ik}b_{kj},\quad d_{ij}=c_{(n+1-i)(r+1-j)}=\sum_{k=1}^ma_{(n+1-i)k}b_{k(r+1-j)},$$ which implies that $\widehat C=\widehat A \widehat B$ and then  $\mathcal R(AB)=\mathcal{R}(A)\mathcal R (B)$.
\item Assume $A\in \mathcal M_{n\times n}(K)$, being $\det A\neq 0$. Therefore $$\mathcal R(AA^{-1})=\mathcal R(A)\mathcal R (A^{-1})=\mathcal R(I_n)=I_n.$$ For instance, $(\mathcal R(A))^{-1}=\mathcal{R}(A^{-1})$.
\item Assume $A\in \mathcal M_{n\times n}(K)$. Due to $\mathcal R(A)$ is obtained throughout $2k$ elementary operations of $A$, interchanging $k$ rows and interchanging $k$ columns, then $\det(\mathcal R(A))=(-1)^{2k}\det A=\det A$.
\item  Assume $A=[a_{ij}]_{n\times n}\in \mathcal M_{n\times n}(K)$. Thus $$\mathrm{Tr}\widehat A=\sum_{i=1}^na_{(n+1-i)(n+1-i)}=a_{nn}+a_{(n-1)(n-1)}+\cdots+ a_{22}+a_{11}=\sum_{k=1}^na_{kk}=\mathrm{Tr} A.$$
\item Assume $A=[a_{ij}]_{n\times m}\in \mathcal M_{n\times m}(K)$ and $\widehat{A^T}=[d_{ij}]_{m\times n}\in \mathcal M_{m\times n}(K)$. We see that $d_{ij}=a_{(m+1-j)(n+1-i)}=c_{ji}$, where $(\widehat A)^T=[c_{ji}]_{m\times n}$ and for instance $\widehat{A^T}=(\widehat A)^T$, which implies that $\mathcal{R}(A^T)=(\mathcal{R}(A))^T$.
\item Assume $\widehat A=A$ and $\widehat B=B$. Therefore, $\widehat{AB}=\widehat A\widehat B=AB$.
\item Assume $\widehat A=-A$ and $\widehat B=-B$. Therefore, $\widehat{AB}=\widehat A\widehat B=AB$.
\item Assume $\widehat A=-A$ and $\widehat B=B$. Therefore, $\widehat{AB}=\widehat A\widehat B=-AB$.
\end{enumerate}
\end{proof}
At this point, we have considered Pasting over an special case of matrices. As we can see, it can be confused when both matrices have the same size, how can we paste them? Another natural question is: how can we paste to matrices whenever $n\neq p$ and $m\neq q$? To avoid this difficulty we introduce \emph{Pasting by blocks}, which will be denoted by $\diamond_b$. Consider matrices $A\in\mathcal{M}_{n\times m}(K)$ and $B\in\mathcal{M}_{r\times s}(K)$, Pasting by blocks of $A$ with $B$ is given by $$A\diamond_b B:=\begin{pmatrix}A & \mathbf{0}_{n\times s}\\\mathbf{0}_{r\times m} &B \end{pmatrix}\in\mathcal{M}_{(n+r)\times(m+s)}(K).$$ It is well known that Pasting by blocks corresponds to a particular case of \emph{block matrices}, also called \emph{partitioned matrices}, see \cite{hall,lantis}.
The following result, although is known from block matrices point of view, is consequence of Proposition \ref{thmpr1} and Proposition \ref{prop41} considering $\mathcal R$ as above and $\diamond_b$ instead of $\diamond$.

\begin{proposition}
Consider the matrices $A \in M_{n\times m}(K)$, $B \in M_{p\times q}(K)$ and $C \in M_{r\times s}(K)$, the following statements hold.
\begin{enumerate}
 \item $\mathcal R(A\diamond_b B)=\mathcal R(B)\diamond_b \mathcal R(A)$
    \item $(A\diamond_b B)\diamond_b C=A\diamond_b (B\diamond_b C)$
\item If $V=\mathcal M_{n\times m}(K)$ and $W=\mathcal M_{p\times q}(K)$, then $V \diamond_b W= \mathcal M_{r\times s}(K)$, where $r=n+p$ and $s=m+q$.
\item $(A\diamond_b B)^T=A^T\diamond_b B^T$.
\item $\det (A\diamond_b B)=\det A  \det B$.
\item $\mathrm{Tr}(A\diamond_b B)=\mathrm{Tr} A+\mathrm{Tr} B$.
\item $(A\diamond_b B)^{-1}=A^{-1}\diamond_b B^{-1}$, $\det(AB)\neq 0$.
\end{enumerate}
\end{proposition}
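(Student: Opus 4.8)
The plan is to reduce every item to the explicit partitioned form
$$A\diamond_b B=\begin{pmatrix}A & \mathbf{0}_{n\times q}\\\mathbf{0}_{p\times m} &B \end{pmatrix}$$
and then invoke the corresponding classical fact about block-diagonal matrices, using Proposition \ref{thmpr1} and Proposition \ref{prop41} where convenient. Item (3) is immediate from the definition: gluing an $n\times m$ block and a $p\times q$ block along the block-diagonal produces an $(n+p)\times(m+q)$ array, so $V\diamond_b W=\mathcal M_{r\times s}(K)$ with $r=n+p$, $s=m+q$. Item (4) follows because transposing a partitioned matrix transposes each block and swaps the off-diagonal block positions (both of which are zero here); alternatively one applies $(X^T)_{ij}=X_{ji}$ entrywise. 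Item (2) is the block analogue of Proposition \ref{thmpr1}(3) and needs no size constraint for $\diamond_b$: both $(A\diamond_b B)\diamond_b C$ and $A\diamond_b(B\diamond_b C)$ expand to the same $3\times 3$ block-diagonal matrix with $A$, $B$, $C$ on the diagonal and zero blocks elsewhere.

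For item (1) I would use Proposition \ref{prop41}(2), which says $\mathcal R=\mathcal R_c\mathcal R_r=\mathcal R_r\mathcal R_c$; that is, Reversing of a matrix regarded as a vector coincides with reversing both the order of its rows and the order of its columns, i.e. a $180^\circ$ rotation of the array (this is also what Lemma \ref{lemmkey} records). Applying this rotation to $\begin{pmatrix}A&\mathbf{0}\\\mathbf{0}&B\end{pmatrix}$ carries the $A$-block into the bottom-right corner, rotated to $\mathcal R(A)$, and the $B$-block into the top-left corner, rotated to $\mathcal R(B)$, leaving the zero blocks zero; hence $\mathcal R(A\diamond_b B)=\begin{pmatrix}\mathcal R(B)&\mathbf{0}\\\mathbf{0}&\mathcal R(A)\end{pmatrix}=\mathcal R(B)\diamond_b\mathcal R(A)$.

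For items (5), (6), (7) — which only make sense when $A$ and $B$ are square, so here $n=m$, $p=q$ and $A\diamond_b B$ is square — I would argue as follows. Expanding $\det(A\diamond_b B)$ by the standard determinant formula for block-triangular matrices (equivalently, Laplace expansion down the columns containing the zero block) gives $\det(A\diamond_b B)=\det A\,\det B$. Reading the main diagonal of the partitioned matrix gives $\mathrm{Tr}(A\diamond_b B)=\mathrm{Tr}A+\mathrm{Tr}B$. Finally, block multiplication gives $(A\diamond_b B)(A^{-1}\diamond_b B^{-1})=\begin{pmatrix}AA^{-1}&\mathbf{0}\\\mathbf{0}&BB^{-1}\end{pmatrix}=I$, so $(A\diamond_b B)^{-1}=A^{-1}\diamond_b B^{-1}$ whenever $\det A\neq 0$ and $\det B\neq 0$, i.e. $\det(AB)\neq 0$. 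There is no serious obstacle here; the only points needing care are item (1), where one must be sure that Reversing-as-a-vector really is the $180^\circ$ rotation that interchanges the two blocks (handled by Proposition \ref{prop41}(2) rather than by re-deriving index formulas), and the implicit squareness hypothesis in (5)–(7), which I would state explicitly.
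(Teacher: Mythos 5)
Your proof is correct and takes essentially the same route as the paper, whose own proof simply reduces everything to the partitioned form $A\diamond_b B=\left(\begin{smallmatrix}A&\mathbf{0}\\ \mathbf{0}&B\end{smallmatrix}\right)=A\oplus B$ and cites the standard block-matrix facts for items (3)--(7); you merely supply the details left implicit, most usefully the verification of item (1) by identifying $\mathcal R$ with the $180^{\circ}$ rotation $\mathcal R_r\mathcal R_c$ so that the two diagonal blocks are swapped and individually reversed. Your explicit flagging of the squareness hypothesis needed for items (5)--(7) is a small but genuine improvement on the paper's presentation.
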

\begin{proof} According to each item:

(1) follow directly from the definition of Pasting by blocks, Reversing and Pasting of vectors.

(2) is due to $A \diamond_b B=A\oplus B$.

(3) to (7) are known properties of block matrices.
\end{proof}

\section*{Final Remarks}
This paper is an invitation to undergraduate students and teachers of linear algebra to explore Pasting and Reversing in advanced topics of linear algebra, as well in other branches of mathematics and physics. Here we solved one question proposed in \cite{acchro1}, which relates Pasting and Reversing with vector spaces and matrix theory. However, there are a lot of questions about the applications of Pasting and Reversing, as well generalizations of these operations. The interested reader can try to solve the following natural questions.
\begin{itemize}
\item What properties for Pasting and Reversing hold when we use $A_\sigma$ as the companion matrix of the linear transformation $\mathcal R_\sigma$, where $\sigma\in S_n$ and $\sigma^2\neq e$? In particular, what happens when $\mathcal R_\sigma^n=e$, being $\mathcal R_\sigma^{n-1}\neq e$?
\item How can we apply Pasting and Reversing in mathematical physics and dynamical systems? It is easy to see that Pauli spin matrices can be written in term of Reversing and the symplectic matrix can be written in term of Pasting. Furthermore, for reversible systems this approch has been applied succesfully, see \cite{MaRa}.
\end{itemize}

\section*{Acknowledgements}
The authors thank to David Blazquez-Sanz by their useful comments and suggestions on this work as well to Greisy Morillo by their hospitality and support during the final part of this work. The first author is partially supported by Dacobian project from UPC -- Spain and by Agenda DIDI 2012--2013 Uninorte.

\end{document}